\documentclass[11pt]{article}
\usepackage{latexsym,a4wide}
\usepackage{amssymb}
\usepackage{amsmath}
\usepackage{color}
\usepackage{graphicx}
\usepackage{amsthm}
\usepackage{indentfirst}
\allowdisplaybreaks%%%%%%%%%%%%

\newcommand \bel {\begin{equation}\label}
\newcommand \ee {\end{equation}}
\newcommand \be {\begin{equation}}

\newcommand \RR {\mathbb R}
\newcommand \HH {\mathbb H}

\newcommand \CC {\mathbb C}
\newcommand \LL {\mathbb L}
\newcommand \NN {\mathbb N}

\newcommand \del \partial
\newcommand \Bcal {\mathcal B}

\newcommand \Dcal {\mathcal D}
\newcommand \Ccal {\mathcal C}

\newcommand \Lcal {\mathcal L}
\newcommand \Fcal {\mathcal F}

\newcommand \bei {\begin{itemize}}
\newcommand \eei {\end{itemize}}

%%%%%%%%%%%%%%%%%%%%%%%%%%%%%%%%%%%%%%%%%%%%%%%%%%%%

\def \eps {\varepsilon}

\newtheorem{theorem}{\color{black}\indent Theorem}[section]
\newtheorem{lemma}{\color{black}\indent Lemma}[section]
\newtheorem{proposition}{\color{black}\indent Proposition}[section]

\newtheorem{remark}{\color{black}\indent Remark}[section]

%%%%%%%%%%%%%%%%%%%%%%%%%%%%%%%%%%%%%%%%%%%%%%%%%%%%%%%%%%%%%%%%%%%%%%%%%
%%%%%%%%%%  instructions for preparation of the page: you can modify:
\textheight241mm \textwidth165mm \hoffset-1.25cm \voffset-1.25cm

\begin{document}
\large
\title{Dynamical behavior near explicit self-similar blow up solutions \\
for the Born-Infeld equation}
\author{
{\sc Weiping Yan}\thanks{School of Mathematics, Xiamen University, Xiamen 361000, P.R. China. Email: yanwp@xmu.edu.cn.}
%\thanks{Laboratoire Jacques-Louis Lions, Sorbonne Universit\'{e}, 4, Place Jussieu, 75252 Paris, France.}
%\newline  
% {\sl Key Words.}  
%%
%{\sl Mathematics Subject Classification}.  
}
\date{July 9, 2018}

\maketitle

\begin{abstract}  
This paper studies the dynamical behavior near a new family of explicit self-similar solutions for the one dimensional Born-Infeld equation. This quasilinear scalar field equation arises from nonlinear electromagnetism, as well as branes in string theory and minimal surfaces in Minkowski spacetimes. 
We show that both this model and the linear wave equation admit the same family of explicit timelike self-similar blow up solutions, meanwhile,
Lyapunov nonlinear stability of those self-similar blow up solutions are given inside a strictly proper subset of the backward light cone.

\end{abstract}

\tableofcontents

%=========================================================================

\section{Introduction and main results} 
\setcounter{equation}{0}

\subsection{Introduction}

In 1933-1934,
Born and Infeld \cite{Born0,Born} introduced a nonlinear electrodynamics theory to generalize
 the linear Maxwell theory. The related equation is called Born-Infeld equation. It also appears in geometric nonlinear theory of electromagnetism, which can be seen as the equation of graphs with zero mean curvature over a domain of the timelike $tx$-plane in Lorentz-Minkowski $\mathbb{L}^3(t,x,y)$. More precisely, the one dimensional Born-Infeld equation takes the following form
\bel{E1-1}
u_{tt}(1+u_x^2)-u_{xx}(1-u_t^2)=2u_tu_xu_{tx},\quad (t,x)\in\RR^+\times\RR,
\ee
where $u=u(t,x)$ is a real scalar-valued function. The solution of equation (\ref{E1-1}) also solves the timelike minimal surface equation
$$
\partial_t\left(\frac{\partial_t u}{\sqrt{1-(\partial_t u)^2+(\del_xu)^2}}\right)-\del_{x}\left(\frac{\del_{x}u}{\sqrt{1-(\partial_tu )^2+(\del_xu)^2}}\right)=0,
$$
which is the Euler-Lagrange equation of Lagrange action
$$
\mathcal{S}(u)=\int_{\mathbb{R}^+}\int_{\mathbb{R}}\sqrt{1-(\partial_t u)^2+(\del_x u)^2}dxdt.
$$

We supplement equation (\ref{E1-1}) with initial data
\bel{E1-1R0}
u(0,x)=u_0(x),\quad u_t(0,x)=u_1(x).
\ee

It is easy to check the one dimensional Born-Infeld equation (\ref{E1-1}) exhibits the scaling invariance 
\begin{equation}\label{E1-5}
u(t,x)\mapsto u_{\lambda}(t,x)=\lambda^{-1} u(\lambda t,\lambda x),\quad \lambda>0,
\end{equation}
and mass conservation 
$$
\int_{\mathbb{R}}\left(\frac{\partial_t u}{\sqrt{1-(\partial_t u)^2+(\del_xu)^2}}\right)dx~is~conserved~along~the~dynamics.
$$

The study of singularity is one of most important topics in physics and mathematics theory. It corresponds to a physical event such as the solution (e.g. a physical flow field) changing topology, or the emergence of a new structure (e.g. a tip, cusp). It can also imply that some essential physics is missing from the equation in question, which should thus be supplemented with additional terms.
Singularity behaviors have been observed in string theory \cite{W}.
Obviously, the Born-Infeld equation (\ref{E1-1}) is energy supercritical. Thus one expects smooth finite energy initial data to lead to finite time self-similar blow up solution.
Eggers and Hoppes \cite{Hop1} gave a detailed discussion on the existence of self-similar blow up solutions (not explicit self-similar solutions) for the Born-Infeld equation (\ref{E1-1}). 
They showed that it admits self-similar solutions
\begin{equation*}
u(t,x)=u_0-\hat{t}+\hat{t}^{a}h(\frac{x}{\hat{t}^b})+\ldots,
\end{equation*}
where $\hat{t}=t_0-t$ and $h(x)\varpropto A_{\pm}x^{\frac{2a}{a+1}}$ for $x\rightarrow\pm\infty$. In higher dimension case, they showed that the radially symmetric membranes equation admits self-similar solutions
\begin{equation*}
u(t,x)=-\hat{t}+\hat{t}^{a}h(\frac{x-x_0}{\hat{t}^b})+\ldots,
\end{equation*}
by analyzing the eikonal equation
\begin{equation*}
1-u_t^2+u_x^2=0.
\end{equation*}
Meanwhile, the swallowtail singularity was also been given by parametric string solution in \cite{Egg}. Alejo and Mu$\tilde{n}$oz \cite{Ale} obtained a sharp nonlinear scattering result for the Born-Infeld equation (\ref{E1-1}).
One can see \cite{Ba,Egg,Hop2,hop,K,Yang2,Lin,tian,Yang} for more discussion on the existence of solutions for the Born-Infeld equation and the membrane equation.

\subsection{Main results}

For the classicification of singularity in physics, there are timlike singularity, spacelike singularity and lightlike (null) singularity. 
To the Born-Infeld equation (\ref{E1-1}), if singular solution $u(t,x)$ of equation (\ref{E1-1}) satisfies $1-(\partial_t u)^2+(\del_xu)^2>0$, then it is called timelike singularity;
if singular solution $u(t,x)$ of equation (\ref{E1-1}) satisfies $1-(\partial_t u)^2+(\del_xu)^2<0$, then it is called spacelike singularity; if
singular solution $u(t,x)$ of equation (\ref{E1-1}) satisfies $1-(\partial_t u)^2+(\del_xu)^2=0$, then it is called lightlike singularity. Recently, the author of this paper \cite{Yan1} found a class of zero mean curvature equations (including the Born-Infeld equation (\ref{E1-1})) admits explicit finite time blow up solutions.
In the present paper, we show that both the Born-Infeld equation (\ref{E1-1}) and the linear wave equation
\bel{E1-2}
u_{tt}-u_{xx}=0,\quad \forall (t,x)\in\RR^+\times\RR
\ee
admit the same family of explicit self-similar solutions
$$
u_k(t,x)=k\ln(\frac{T-t+x}{T-t-x}),\quad |x|< T-t,\quad t\in[0,T),\quad \forall k\in\RR/\{0\},
$$
where constant $T$ denotes the maximal existence time. Meanwhile, those solutions of the Born-Infeld equation (\ref{E1-1})
are timelike self-similar solutions. After that, we prove those self-similar solutions are Lyapunov nonlinear stability inside a strictly proper subset of the backward light cone for the Born-Infeld equation (\ref{E1-1}). We now state main results of this paper.

\begin{theorem}
Both the Born-Infeld equation (\ref{E1-1}) and the linear wave equation (\ref{E1-2}) admit the same family of explicit self-similar solutions
\bel{E1-10}
u_k(t,x)=k\ln(\frac{T-t+x}{T-t-x}),\quad |x|< T-t,\quad t\in[0,T),\quad \forall k\in\RR/\{0\},
\ee
where $T$ denotes the maximal existence time.
Moreover, those explicit solutions of the Born-Infeld equation are timelike.

\end{theorem}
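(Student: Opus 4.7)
The plan is a direct verification, since the theorem makes a very concrete claim about an explicit function. I would introduce the shorthand $a=T-t+x$, $b=T-t-x$, $s=T-t$, and $w=ab=s^{2}-x^{2}$, noting that on the backward light cone $\{|x|<T-t\}$ both $a,b>0$ and $w>0$. Writing $u_{k}=k\ln a-k\ln b$ makes the structure transparent: $\ln a$ depends only on the combination $x-t$ (through $a=x-t+T$), and $\ln b$ depends only on $x+t$ (through $b=-(x+t)+T$), so each is a d'Alembert traveling wave. Computing $\partial_{t}\ln a=-1/a$, $\partial_{x}\ln a=1/a$ and both $\partial_{tt}\ln a=\partial_{xx}\ln a=-1/a^{2}$, and similarly for $\ln b$, immediately yields $(u_{k})_{tt}-(u_{k})_{xx}=0$, so that $u_{k}$ solves the linear wave equation \eqref{E1-2}.

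Next I would tabulate the first and second derivatives of $u_{k}$ in terms of $s$, $x$, $w$. A short computation gives
\begin{equation*}
u_{t}=\frac{2kx}{w},\qquad u_{x}=\frac{2ks}{w},\qquad u_{tt}=u_{xx}=\frac{4ksx}{w^{2}},\qquad u_{tx}=\frac{2k(s^{2}+x^{2})}{w^{2}}.
\end{equation*}
The identity $u_{tt}=u_{xx}$ (already known from the wave equation) is the key observation that makes the Born-Infeld verification painless, because it collapses the left-hand side of \eqref{E1-1}:
\begin{equation*}
u_{tt}(1+u_{x}^{2})-u_{xx}(1-u_{t}^{2})=u_{tt}\bigl(u_{x}^{2}+u_{t}^{2}\bigr)=\frac{4ksx}{w^{2}}\cdot\frac{4k^{2}(s^{2}+x^{2})}{w^{2}}=\frac{16k^{3}sx(s^{2}+x^{2})}{w^{4}}.
\end{equation*}
A parallel computation of the right-hand side gives
\begin{equation*}
2u_{t}u_{x}u_{tx}=2\cdot\frac{2kx}{w}\cdot\frac{2ks}{w}\cdot\frac{2k(s^{2}+x^{2})}{w^{2}}=\frac{16k^{3}sx(s^{2}+x^{2})}{w^{4}},
\end{equation*}
which matches the left-hand side identically, so \eqref{E1-1} is satisfied on $\{|x|<T-t\}$.

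For the timelike property, I would note that
\begin{equation*}
1-u_{t}^{2}+u_{x}^{2}=1+\frac{4k^{2}(s^{2}-x^{2})}{w^{2}}=1+\frac{4k^{2}}{w}>1>0
\end{equation*}
throughout the backward light cone, where $w>0$. Hence the singularity of $u_{k}$ as $t\to T^{-}$ is timelike for every $k\neq 0$, and the family \eqref{E1-10} consists of timelike self-similar blow up solutions of \eqref{E1-1} that are simultaneously exact solutions of \eqref{E1-2}.

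There is essentially no conceptual obstacle; the proof is a computational check. The only real care is bookkeeping signs when differentiating $\ln(T-t\pm x)$ and then observing that the coincidence $u_{tt}=u_{xx}$ reduces the quasilinear operator in \eqref{E1-1} to the clean form $u_{tt}(u_{t}^{2}+u_{x}^{2})$, which is what allows the Born-Infeld identity to close.
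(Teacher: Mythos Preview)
Your computations are correct and the verification is clean. However, your approach differs from the paper's. The paper does not verify the explicit formula directly; instead it introduces similarity coordinates $\tau=-\log(T-t)$, $\rho=x/(T-t)$, rewrites both the Born-Infeld equation \eqref{E1-1} and the linear wave equation \eqref{E1-2} in these variables, and observes that the two transformed equations share the same steady-state ODE $(\rho^{2}-1)v_{\rho\rho}+2\rho v_{\rho}=0$. Solving this ODE yields $v(\rho)=k\ln\frac{1+\rho}{1-\rho}$, which becomes \eqref{E1-10} after reverting to $(t,x)$. Your route is a post-hoc direct check: it is shorter, entirely elementary, and your observation that $u_{tt}=u_{xx}$ collapses the quasilinear operator is a nice shortcut the paper does not isolate. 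What the paper's approach buys is an explanation of \emph{why} the same family solves both equations (the shared steady ODE in similarity variables) and, implicitly, a justification of the adjective ``self-similar'' via the scaling \eqref{E1-5}; your argument verifies the claim but does not illuminate its origin. Both are valid proofs of the stated theorem.
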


\begin{remark}
We remark that both the Born-Infeld equation (\ref{E1-1}) and the linear wave equation (\ref{E1-2}) also admit other kind of solutions except
blow up solutions (\ref{E1-10}). For example, the solution
$$
u(t,x)=g(t+x)-g(t-x),
$$
where $g(\cdot)$ is any smooth function.

\end{remark}

\begin{theorem}
The family of explicit timelike self-similar solutions (\ref{E1-10}) of the Born-Infeld equation (\ref{E1-1}) is Lyapunov nonlinear stability inside a strictly subset of the backward light cone $\Bcal_{t}$, that is,
for a sufficient small $\eps>0$ and a fixed constant $s\geq2$, if initial data (\ref{E1-1R0}) satisfies
$$
\|u_0(x)-u_k(0,x)\|_{\HH^{s}(\Bcal_0)}+\|u_1(x)-\del_tu_k(0,x)\|_{\HH^{s}(\Bcal_0)}<\eps,
$$
then equation (\ref{E1-1}) admits a local solution $u(t,x)$ such that 
$$
\sup_{t\in(0,\overline{T})}\| u(t,x) - u_k(t,x)\|_{\HH^s(\Bcal_t)}< \eps, 
$$
where $\HH^s(\Bcal_t)$ denotes the Sobolev space, and the domain
$$
\Bcal_{t}:=\Big\{(t,x)\Big| x\in[0,\delta(T-t)],\quad t\in[0,\overline{T}]\Big\},
$$
and constants $\delta\sim 1$ and $\overline{T}\in[T-\overline{\delta},T)$ with $0<\overline{\delta}\ll1$.

\end{theorem}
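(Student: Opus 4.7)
The plan is to write $u=u_k+v$ for a perturbation $v$, derive the quasilinear wave equation satisfied by $v$, and then propagate $\HH^s$-smallness on the shrinking subdomain $\Bcal_t$ by a higher-order energy estimate. Substituting $u=u_k+v$ into (\ref{E1-1}) and using that $u_k$ simultaneously solves the linear wave equation (\ref{E1-2}) and the Born-Infeld equation, all $O(1)$ terms cancel and $v$ satisfies a schematic equation of the form
\begin{equation*}
v_{tt} - v_{xx} + \Lcal(u_k)\,v + \Ocal\bigl(v,\partial v,\partial^2 v\bigr) = 0,
\end{equation*}
where $\Lcal(u_k)$ is a linear first- and second-order operator whose coefficients depend on $\del u_k,\del^2 u_k$, and the remainder is at least quadratic in $(v,\del v,\del^2 v)$ with coefficients smoothly depending on $u_k$. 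The decisive observation is that on the strictly proper subdomain $\{x\in[0,\delta(T-t)]\}$ with $\delta<1$, the profile $u_k$ and each of its partial derivatives is smooth and uniformly bounded on $[0,\overline{T}]$, so every coefficient in the perturbation equation is controlled by a constant $C=C(\delta,k,s)$.

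Next I would introduce the high-order energy
\begin{equation*}
E_s(t) = \sum_{|\alpha|\leq s}\int_{\Bcal_t}\bigl(|\del_t\del^{\alpha}v|^2 + |\del_x\del^{\alpha}v|^2 + |\del^{\alpha}v|^2\bigr)\,dx,
\end{equation*}
commute $\del^{\alpha}$ through the perturbation equation to obtain $\Box\,\del^{\alpha}v = F_{\alpha}$ (with commutators controlled in $\HH^s(\Bcal_t)$ by Moser-type inequalities and one-dimensional Sobolev embedding), and then multiply by $\del_t\del^{\alpha}v$ and integrate over $\Bcal_t$. The moving endpoint at $x=\delta(T-t)$ requires a Reynolds-transport correction; combining this with integration by parts in $x$ yields a differential inequality
\begin{equation*}
\frac{d}{dt}E_s(t) + (\text{boundary flux at }x=\delta(T-t)) \leq C\,E_s(t) + C\,E_s(t)^{3/2}.
\end{equation*}
A standard continuity/bootstrap argument, together with local well-posedness (by Kato's theory for quasilinear symmetric hyperbolic systems) and Gronwall, then propagates $E_s(0)<\eps^2$ to $E_s(t)<\eps^2$ on $[0,\overline{T}]$, giving the desired bound $\|u(t,\cdot)-u_k(t,\cdot)\|_{\HH^s(\Bcal_t)}<\eps$.

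The principal obstacle is the flux on $\{x=\delta(T-t)\}$: for $\delta<1$ this curve is timelike for the flat d'Alembertian, so the standard multiplier $\del_t$ does not produce a sign-definite boundary quadratic form. I would overcome this either by extending the initial perturbation smoothly (and smallly) outside $\Bcal_0$ and invoking finite propagation speed for the quasilinear equation, so that the solution on $\Bcal_t$ coincides with the Cauchy solution on $\RR$ and the boundary becomes a virtual cut; or by replacing $\del_t$ with a modified multiplier $\del_t+c\,\del_x$ with $c=c(\delta)$ chosen so that the quadratic form at $x=\delta(T-t)$ acquires the favorable sign. Either route forces $\delta$ to be a fixed constant of order one with $\delta<1$, and restricts the lifespan to $\overline{T}\in[T-\overline{\delta},T)$, matching the statement. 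A secondary, routine, difficulty is that commuting derivatives past the quasilinear coefficients produces derivatives of $u_k$ of order up to $s+1$, but on $\Bcal_t$ these remain uniformly bounded and are absorbed into $C(\delta,k,s)$.
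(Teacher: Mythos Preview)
Your proposal rests on an incorrect structural reading of the linearized equation. When you substitute $u=u_k+v$ into (\ref{E1-1}), the principal part of the equation for $v$ is \emph{not} a perturbation of the flat d'Alembertian. Dividing through by the coefficient of $v_{tt}$, the linearized operator has principal part
\[
v_{tt}-\frac{\big((T-t)^2-x^2\big)^2-4x^2}{\big((T-t)^2-x^2\big)^2+4(T-t)^2}\,v_{xx}
-\frac{8x(T-t)}{\big((T-t)^2-x^2\big)^2+4(T-t)^2}\,v_{tx},
\]
and the coefficient of $v_{xx}$ changes sign across the curve $x=-1+\sqrt{1+(T-t)^2}$, which lies strictly inside $\Bcal_t$ when $\delta\sim1$. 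The perturbation equation is therefore of \emph{mixed elliptic--hyperbolic type}, degenerating along that curve. Consequently the multiplier $\del_t\del^{\alpha}v$ produces a term $+b(t,x)|\del_x\del^{\alpha}v|^2$ whose sign is wrong in the elliptic subregion, so your energy $E_s(t)$ is not coercive there and the differential inequality you wrote cannot be obtained. For the same reason, Kato's symmetric-hyperbolic theory does not furnish local well-posedness, and neither extension by finite propagation speed nor a tilted multiplier $\del_t+c\,\del_x$ repairs the issue: the obstruction is interior, not at the lateral boundary.

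The paper's proof confronts exactly this. It splits $\Bcal_t$ into the hyperbolic region $\Omega_1=[0,-1+\sqrt{1+(T-t)^2})$ and the elliptic region $\Omega_2=[-1+\sqrt{1+(T-t)^2},\delta(T-t)]$, derives separate weighted energy estimates (with weights $e^{-\nu/(T-t)^{\chi}}$ to absorb the singular coefficients that grow like negative powers of $T-t$), and treats each piece via Oleinik-type degenerate hyperbolic and degenerate elliptic theory with a mixed boundary condition on the degenerate line. Because the linearized estimate \eqref{XX1-5} exhibits a loss of derivatives relative to the forcing, the nonlinear step is closed by a Nash--Moser iteration rather than a simple Gronwall/bootstrap. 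Your outline would need both the mixed-type analysis and the Nash--Moser machinery to go through.
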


\begin{remark}
We should notice $T$ denotes the maximal existence time, which is a finite constant.
In the cone $|x|< T-t$,
$$
\frac{T-t+x}{T-t-x}>0,
$$
so solutions (\ref{E1-10}) are meaningful. 

Furthermore, by (\ref{E1-10}), we know that initial data is
$$
u_k(0,x)=k\ln(\frac{T+x}{T-x}),\quad |x|< T,
$$
which is a smooth function inside $(-T,T)$. 

The timelike boundary condition of the perturbation term is 
$$
w(t,x)|_{x\in\del\Bcal_t}:=\Big(u(t,x) - u_k(t,x)\Big)|_{x\in\del\Bcal_t}=0.
$$
\end{remark}

%\paragraph{Sketch of the proof}

\subsection{Sketch of the proof}

The idea of finding explicit self-similar solutions is that we rewrite the Born-Infeld equation (\ref{E1-1}) in similarity coordinates, then solutions of ODE
\bel{YYY1-1}
(\rho^2-1)v_{\rho\rho}+2\rho v_{\rho}=0
\ee
give self-similar solutions of equation (\ref{E1-1}). Luckily, ODE (\ref{YYY1-1}) admits a family of explicit solutions. Meanwhile, we find that the linear wave equation (\ref{E1-2}) in similarity coordinates takes the form
$$
\phi_{\tau\tau}+\phi_{\tau}+2\rho \phi_{\rho}+2\rho \phi_{\tau\rho}+(\rho^2-1)\phi_{\rho\rho}=0,
$$
which also has the same steady equation (\ref{YYY1-1}). Thus explicit self-similar solutions $$u(t,x)=k\ln(\frac{T-t+x}{T-t-x}),\quad k\in \mathbb{R}\setminus \{0\}$$ of the Born-Infeld equation (\ref{E1-1}) are also 
explicit self-similar solutions of linear wave equation. This idea can be applied to find explicit self-similar solutions of other kinds of PDEs.

To show Lyapunov nonlinear stability of those self-similar solutions, we need to prove the local existence of regular solution for the perturbation equation, and this local solution is small in some Sobolev space. Since there is loss of derivatives, we have to employ Nash-Moser iteration scheme which has been used in \cite{Yan} 
to show local existence of regular small solution in the space $\Ccal^s_{2}:=\bigcap_{i= 0}^2\CC^i([0,T);\HH^{s-i}(\Omega))$ with a fixed constant $s\geq2$. Since coefficients of perturbation equation contain a term like
${1\over (T-t)^2-x^2}$, the blow up point $(T,0)$ can't be contained. Our result gives the analysis of dynamical behavior of solutions in a small $\delta$-neighborhood of blow up point $(T,0)$. Hence, the problem of nonlinear stability of self-similar solutions (\ref{E1-10}) at blow up point for the Born-Infeld equation (\ref{E1-1}) is still open.

\subsection{Notation}

Throughout this paper, the symbol $a\lesssim b$ means that there exists a positive constant $C$ such that $a\leq Cb$.
Let $\Omega$ be a bounded interval of $\RR$, we denote the usual norm of $\LL^2(\Omega)$ and $\HH^{s}(\Omega)$ by $\|\cdot\|_{\LL^2(\Omega)}$and $\|\cdot\|_{\HH^{s}(\Omega)}$ with constant $s\geq2$, respectively.
The space $\LL^2([0,T);\HH^{s}(\Omega))$ is equipped with the norm 
$$
\|u\|^2_{\LL^2([0,T);\HH^{s}(\Omega))}:=\int_0^{T}\|u(t,\cdot)\|^2_{\HH^{s}(\Omega)}dt.
$$
We also introduce the function space $\Ccal^s_{2}:=\bigcap_{i= 0}^2\CC^i([0,T);\HH^{s-i}(\Omega))$ with the norm
$$
\|u\|^2_{\Ccal^s_{2}}:=\sup_{t\in(0,T)}\sum_{i= 0}^2\|\partial^{i}_{t}u\|^2_{\HH^{s-i}(\Omega)}.
$$

The organization of this paper is as follows. In section 2, we prove both the Born-Infeld equation (\ref{E1-1}) and the linear wave admit the same family of explicit self-similar solutions. 
In the section 3, we show Lyapunov nonlinear stability of those self-simiilar solutions by using Nash-Moser iteration scheme.

\textbf{Acknowledgments.} 
%The author is grateful to the editor and anonymous referees who read the paper very carefully and made numerous suggestions for improving the presentation.
The author expresses his sincerely thanks to the BICMR of Peking University and Professor Gang Tian for constant support and encouragement.
The author also expresses his sincere thanks to Prof. Baoping Liu for his many kind helps and suggestions, and thanks to Prof. Chao Xia for his discussion on elliptic equations.
The author is supported by NSFC No 11771359, and the Fundamental Research Funds for the Central Universities (Grant No. 20720190070, No.201709000061 and No. 20720180009).

%-------------------------------------------------------------------------------------------------------------------------------

\section{Self-similar solutions for the Born-Infeld equation and the linear wave equation}\setcounter{equation}{0}

In this section, we show both the Born-Infeld equation and the linear wave equation admit the same family of explicit self-similar solutions.
Since both the Born-Infeld equation and the linear wave equation will be rewritten in similarity coordinates, the maximal existence time $T$ should be a finite constant.
Otherwise, we can not use the similarity coordinates (see \cite{B1,BB}). 

We introduce the similarity coordinates
$$
\tau=-\log(T-t), \quad \rho=\frac{x}{T-t},
$$
then we denote by
$$
u(t,x)=\phi(-\log(T-t),\frac{x}{T-t}).
$$
Direct computation gives that
$$
u_t(t,x)=e^{\tau}(\phi_{\tau}+\rho \phi_{\rho}),
$$
$$
u_{tt}(t,x)=e^{2\tau}(\phi_{\tau\tau}+\phi_{\tau}+2\rho \phi_{\rho}+2\rho \phi_{\tau\rho}+\rho^2\phi_{\rho\rho}),
$$
$$
u_x(t,x)=e^{\tau}\phi_{\rho},
$$
$$
u_{xx}(t,x)=e^{2\tau}\phi_{\rho\rho},
$$
$$
u_{tx}(t,x)=e^{2\tau}(\phi_{\tau\rho}+\phi_{\rho}+\rho \phi_{\rho\rho}).
$$

Thus equation (\ref{E1-1}) is transformed into the one dimensional quasilinear wave equation
\bel{E2-1}
\aligned
v_{\tau\tau}-(1-\rho^2)v_{\rho\rho}&+v_{\tau}+2\rho v_{\rho}+2\rho v_{\tau\rho}+e^{2\tau}v_{\rho}^2(v_{\tau\tau}+v_{\tau}+2\rho v_{\rho}+2\rho v_{\tau\rho}+\rho^2v_{\rho\rho})\\
&+e^{2\tau}(v_{\tau}+\rho v_{\rho})^2v_{\rho\rho}-2e^{2\tau}v_{\rho}(v_{\tau}+\rho v_{\rho})(v_{\rho}+\rho v_{\rho\rho}+v_{\tau\rho})=0,
\endaligned
\ee
and the linear wave equation (\ref{E1-2}) is transformed into 
\bel{E2-1R1}
\phi_{\tau\tau}+\phi_{\tau}+2\rho \phi_{\rho}+2\rho \phi_{\tau\rho}+(\rho^2-1)\phi_{\rho\rho}=0.
\ee

Obviously, both equations (\ref{E2-1}) and (\ref{E2-1R1}) have the same steady equation 
$$
(\rho^2-1)v_{\rho\rho}+2\rho v_{\rho}=0,
$$
which is an ODE. Direct computation shows that it has a family of solutions
$$
v(\rho)=k\ln\frac{1+\rho}{1-\rho},
$$
where $k$ is an arbitrary constant in $\mathbb{R}\setminus \{0\}$.

Obviously, we should restrict $\rho$ to the domain
$$
\{\rho| \rho\in(-1,1)\}.
$$
Hence the Born-Infeld equation (\ref{E1-1}) has a family of explicit self-similar solutions
\bel{E2-2}
u_k(t,x)=k\ln(\frac{T-t+x}{T-t-x}),~~k\in \mathbb{R}\setminus \{0\},
\ee
where
\bel{E02-2}
|x|<T-t,\quad t\in[0,T).
\ee

Furthermore, it holds
$$
\partial_xu_k|_{x=0}=\frac{k}{T-t}\rightarrow+\infty,~~as~~t\rightarrow T^{-},
$$
and
$$
1+(u_k)_x^2-(u_k)_t^2=1+\frac{4}{(T-t)^2-x^2}>0,
$$
Hence, this kind of explicit self-similar solutions $u_k(t,x)$ given in (\ref{E2-2}) is timelike singularity.

%==================================================================================

\section{Lyapunov nonlinear stability of self-similar solutions}
Let $\delta\sim1$ be a positive constant.
Since explicit self-similar solutions (\ref{E2-2}) are defined in domain (\ref{E02-2}), 
we will consider the dynamical behavior of self-similar solutions inside the domain
$$
\Bcal_t:=\Big\{(t,x) \Big| x\in[0,\delta(T-t)],\quad t\in[0,\overline{T}]\Big\},
$$
where $\overline{T}\in[T-\overline{\delta},T)$ with $0<\overline{\delta}\ll1$.

Let 
\bel{E3-0}
u(t,x)=u_k(t,x)+w(t,x),
\ee
where $u_k(t,x)$ is given in (\ref{E2-2}).

Note that $k\in\RR/\{0\}$ is a constant. We only deal with the case of $k=1$ in (\ref{E3-0}) for convenience. 
Substituting (\ref{E3-0}) into (\ref{E1-1}), the perturbation equation takes the form
\bel{E3-1}
\aligned
w_{tt}&-\frac{((T-t)^2-x^2)^2-4x^2}{((T-t)^2-x^2)^2+4(T-t)^2}w_{xx}
-\frac{8(T-t)}{((T-t)^2-x^2)^2+4(T-t)^2}w_t\\
&+\frac{8x}{((T-t)^2-x^2)^2+4(T-t)^2}w_x-\frac{8x(T-t)}{((T-t)^2-x^2)^2+4(T-t)^2}w_{tx}\\
&+\frac{((T-t)^2-x^2)^2}{((T-t)^2-x^2)^2+4(T-t)^2}\Big[(\frac{4x(T-t)}{((T-t)^2-x^2)^2}+w_{tt})w_x^2+(\frac{4x(T-t)}{((T-t)^2-x^2)^2}+w_{xx})w_t^2\\
&-2(\frac{2(T-t)}{(T-t)^2-x^2}w_t+\frac{2x}{(T-t)^2-x^2}w_{x})w_{tx}
+2(\frac{2(T-t)}{(T-t)^2-x^2}w_{tt}-\frac{2((T-t)^2+x^2)}{((T-t)^2-x^2)^2}w_{t})w_{x}\\
&+2(\frac{2x}{(T-t)^2-x^2}w_{xx}-w_xw_{tt})w_t\Big]=0,\quad (t,x)\in\Bcal_t,
\endaligned
\ee
with small initial data
\bel{E03-01}
w(0,x)=\eps w_0(x),\quad w_t(0,x)=\eps w_1(x).
\ee

Obviously, it holds
$$
\aligned
&\eps w_0(x)=u_0(x)-\ln(\frac{T+x}{T-x}),\\
&\eps w_1(x)=u_1(x)-\frac{2x}{T^2-x^2},
\endaligned
$$
where $\eps\ll1$ is a positive small constant.

We supplement the timelike boundary condition
\bel{E03-02}
w(t,x)|_{x\in\del\Omega}=0,
\ee
where $\del\Omega=\{x=0\}\cup\{x=\delta(T-t)\}$.

In the domain $\Bcal_t$, one can see
$$
\frac{((T-t)^2-x^2)^2-4x^2}{((T-t)^2-x^2)^2+4(T-t)^2}=\frac{(1-\frac{x^2}{(T-t)^2})^2-\frac{4x^2}{(T-t)^4}}{(1-\frac{x^2}{(T-t)^2})^2+\frac{4}{(T-t)^2}}
\in[-\frac{\frac{4\delta^2}{(T-t)^2}-(1-\delta^2)^2}{(1-\delta^2)^2+\frac{4}{(T-t)^2}},\frac{1}{1+\frac{4}{(T-t)^2}}].
$$

Note that $\delta\sim 1$. It holds
$$
-\frac{\frac{4\delta^2}{(T-t)^2}-(1-\delta^2)^2}{(1-\delta^2)^2+\frac{4}{(T-t)^2}}<0.
$$
Thus it implies that quasilinear equation (\ref{E3-1}) is an elliptic-hyperbolic mixed-type equation, and (\ref{E3-1}) is degenerate on line 
$$
x=-1+\sqrt{1+(T-t)^2}.
$$
More precisely, equation (\ref{E3-1}) is a strictly hyperbolic equation in the hyperbolic domain
$$
\Omega_1:=[0,-1+\sqrt{1+(T-t)^2}),
$$
and it is a degenerate elliptic equation in the elliptic domain
$$
\Dcal:=\{(t,x)| (t,x)\in[0,\overline{T}]\times\Omega_2\},
$$
where
$$
\Omega_2:=[-1+\sqrt{1+(T-t)^2},\delta(T-t)].
$$

As we known, a classical mixed type equation is Tricomi-type equation. Morawetz \cite{Mora} gave a detailed account of the historical background and known results on mixed type equations and transonic flows.
In our case, there is a degenerate line, thus the whole domain is divided into elliptic domain and hyperbolic domain. Moreover, Since we require the solution vanishes along the degenerate line,
one can prescribe the solution arbitrarily (as a smooth function) on the degenerate line. This idea has been used to solve local existence of smooth solutions for a class of mixed-type Monge-Amp\'{e}re equations \cite{Han,Han1}.

%\marginpar{Give here a graph on domain}

\subsection{The linearized equation with singular coefficients}
We consider the linearized equation of (\ref{E3-1}) around a fixed function $\overline{w}$ with an external force $f(t,x)$ as follows
\bel{E3-2}
a(t,x)h_{tt}-b(t,x)h_{xx}-c(t,x)h_t+d(t,x)h_x-e(t,x)h_{tx}=f(t,x),
\ee
where
\bel{E3-2x}
\aligned
a(t,x)&:=1+j(t,x)\Big(\overline{w}_x^2+\frac{4\overline{w}_x(T-t)}{(T-t)^2-x^2}\Big),\\
b(t,x)&:=\frac{\Big((T-t)^2-x^2\Big)^2-4x^2}{\Big((T-t)^2-x^2\Big)^2+4(T-t)^2}-j(t,x)\Big(\overline{w}_t^2+\frac{4\overline{w}_t(T-t)}{(T-t)^2-x^2}\Big),\\
c(t,x)&:=\frac{8(T-t)}{\Big((T-t)^2-x^2\Big)^2+4(T-t)^2}-j(t,x)\Big[2\overline{w}_t\Big(\frac{4x(T-t)}{((T-t)^2-x^2)^2}+\overline{w}_{xx}\Big)\\
&+\frac{4\overline{w}_{tx}(T-t)}{(T-t)^2-x^2}+\frac{2\overline{w}_x((T-t)^2+x^2)}{((T-t)^2-x^2)^2}-2(\frac{2x\overline{w}_{xx}}{(T-t)^2-x^2}-\overline{w}_x\overline{w}_{tx})\Big],\\
d(t,x)&:=\frac{8x}{\Big((T-t)^2-x^2\Big)^2+4(T-t)^2}+j(t,x)\Big[\frac{8x(T-t)\overline{w}_x}{((T-t)^2-x^2)^2}\\
&+2\overline{w}_{tt}\Big(\overline{w}_x+\frac{2(T-t)}{(T-t)^2-x^2}\Big)-\frac{4x\overline{w}_{tx}}{(T-t)^2-x^2}-\frac{4\overline{w}_t((T-t)^2+x^2)}{((T-t)^2-x^2)^2}-2\overline{w}_{tx}w_t\Big],\\
e(t,x)&:=\frac{8x(T-t)}{((T-t)^2-x^2)^2+4(T-t)^2}+2j(t,x)\Big[\frac{2(T-t)}{(T-t)^2-x^2}\overline{w}_t+\frac{2x\overline{w}_x}{(T-t)^2-x^2}+2\overline{w}_x\overline{w}_t\Big],
\endaligned
\ee
and
$$
j(t,x):=\frac{((T-t)^2-x^2)^2}{((T-t)^2-x^2)^2+4(T-t)^2}.
$$

Since the dynamical behavior of self-similar solutions (\ref{E2-2}) is considered inside the lightcone, the singularity of coefficients may only take place at $\frac{x}{T-t}=0$ or $x=-1+\sqrt{1+(T-t)^2}$ or $\frac{x}{T-t}=\delta$. Moreover, it is easy to see that
$$
\aligned
j(t,x)=\left\{
\begin{array}{lll}
&&\frac{1}{4+(T-t)^2},\quad when \quad\frac{x}{T-t}=0,\\
&&\frac{2+(T-t)^2-2\sqrt{1+(T-t)^2}}{2+2(T-t)^2-2\sqrt{1+(T-t)^2}}\quad when \quad x=-1+\sqrt{1+(T-t)^2},\\
&&\frac{(1-\delta^2)^2(T-t)^2}{4+(1-\delta^2)^2(T-t)^2},\quad\quad when \quad \frac{x}{T-t}=\delta,
\end{array}
\right.
\endaligned
$$
which means that there is no singular point of $j(t,x)$. In what follows, we denote $j(t,x)$ by itself at $\frac{x}{T-t}=0$, $x=-1+\sqrt{1+(T-t)^2}$
and $\frac{x}{T-t}=\delta$ for convenience.
So asymptotic behavior of coefficients $a(t,x)$, $b(t,x)$, $c(t,x)$, $d(t,x)$ and $e(t,x)$ at those possible lines takes the following form

$$
\aligned
a(t,x)=\left\{
\begin{array}{lll}
&&1+j(t,x)(\overline{w}_x^2+\frac{4\overline{w}_x}{T-t}),\quad when \quad\frac{x}{T-t}=0,\\
&&1+j(t,x)(\overline{w}_x^2+\frac{2\overline{w}_x(1+\sqrt{1+(T-t)^2})}{T-t}),\quad when \quad x=-1+\sqrt{1+(T-t)^2},\\
&&1+j(t,x)(\overline{w}_x^2+\frac{4\overline{w}_x}{(1-\delta^2)(T-t)}),\quad\quad when \quad \frac{x}{T-t}=\delta,
\end{array}
\right.
\endaligned
$$
$$
\aligned
b(t,x)=\left\{
\begin{array}{lll}
&&\frac{(T-t)^2}{4+(T-t)^2}+j(t,x)(\overline{w}_x^2+\frac{4\overline{w}_x}{T-t}),\quad when \quad\frac{x}{T-t}=0,\\
&&j(t,x)(\overline{w}_x^2+\frac{2\overline{w}_x(1+\sqrt{1+(T-t)^2})}{T-t}),\quad when \quad x=-1+\sqrt{1+(T-t)^2},\\
&&-\frac{4\delta^2-(1-\delta^2)^2(T-t)^2}{4+(1-\delta^2)^2(T-t)^2}+j(t,x)(\overline{w}_x^2+\frac{4\overline{w}_x}{(1-\delta^2)(T-t)}),\quad\quad when \quad \frac{x}{T-t}=\delta,
\end{array}
\right.
\endaligned
$$
$$
\aligned
c(t,x)=\left\{
\begin{array}{lll}
&&\frac{8}{(T-t)^3+4(T-t)}-j(t,x)\Big(2(\overline{w}_t+\overline{w}_x)\overline{w}_{xx}+\frac{4\overline{w}_{tx}}{T-t}+\frac{2\overline{w}_x}{(T-t)^2}\Big),\quad when \quad\frac{x}{T-t}=0,\\
&&\frac{2}{(\sqrt{1+(T-t)^2}-1)^2+1}-j(t,x)\Big(2\overline{w}_t(\frac{4(\sqrt{1+(T-t)^2}+1)}{(T-t)^2}+\overline{w}_{xx})\\
&&+\frac{2\overline{w}_{tx}(\sqrt{1+(T-t)^2}+1)}{(T-t)}-2(\overline{w}_{xx}-\overline{w}_x\overline{w}_{tx})
+\frac{\overline{w}_x((T-t)^2+\sqrt{1+(T-t)^2}+1)(\sqrt{(T-t)+1}+1)^2}{(T-t)^2}\Big),\\
&&\quad\quad\quad when \quad x=-1+\sqrt{1+(T-t)^2},\\
&&\frac{8}{(1-\delta^2)^2(T-t)^3+4(T-t)}-j(t,x)\Big(2\overline{w}_t(\frac{4\delta}{(1-\delta^2)^2(T-t)^2}+\overline{w}_{xx})+\frac{4\overline{w}_{tx}}{(1-\delta^2)(T-t)}\\
&&+\frac{2\overline{w}_x(1+\delta^2)}{(1-\delta^2)^2(T-t)^2}-2(\frac{2\delta\overline{w}_{xx}}{(1-\delta^2)(T-t)}-\overline{w}_x\overline{w}_{tx})\Big), \quad\quad when \quad \frac{x}{T-t}=\delta,
\end{array}
\right.
\endaligned
$$
$$
\aligned
d(t,x)=\left\{
\begin{array}{lll}
&&j(t,x)\Big(2\overline{w}_{tt}(\overline{w}_x+\frac{2}{T-t})-\frac{4\overline{w}_t}{(T-t)^2}-2\overline{w}_{tx}w_t\Big),\quad when \quad\frac{x}{T-t}=0,\\
&&\frac{2(\sqrt{(T-t)^2+1}-1)}{2+2(T-t)^2-\sqrt{(T-t)^2+1}}+j(t,x)\Big(\frac{2(T-t)(\sqrt{(T-t)^2+1}-1)\overline{w}_x}{2+(T-t)^2-2\sqrt{(T-t)^2+1}}
+2\overline{w}_{tt}(\overline{w}_x+\frac{\sqrt{(T-t)^2+1}+1}{T-t})\\
&&-2\overline{w}_{tx}-\frac{2\overline{w}_t(1+(T-t)^2-\sqrt{(T-t)^2+1})}{2+(T-t)^2-\sqrt{1+(T-t)^2}}-2\overline{w}_{tx}w_t\Big),\quad when \quad x=-1+\sqrt{1+(T-t)^2},\\
&&\frac{8\delta}{(1-\delta^2)^2(T-t)^3+4(T-t)}+j(t,x)\Big(\frac{8\delta\overline{w}_x}{(T-t)^2}
+2\overline{w}_{tt}(\overline{w}_x+\frac{2(T-t)}{(T-t)^2-x^2})\\
&&-\frac{4\delta\overline{w}_{tx}}{(1-\delta^2)(T-t)}-\frac{4(1+\delta^2)\overline{w}_t}{(T-t)^2}-2\overline{w}_{tx}w_t\Big),\quad\quad when \quad \frac{x}{T-t}=\delta,
\end{array}
\right.
\endaligned
$$
$$
\aligned
e(t,x)=\left\{
\begin{array}{lll}
&&4j(t,x)(\frac{1}{T-t}+\overline{w}_x)\overline{w}_t,\quad when \quad\frac{x}{T-t}=0,\\
&&\frac{(T-t)(\sqrt{(T-t)^2+1}-1)}{1+(T-t)^2-\sqrt{(T-t)^2+1}}+2j(t,x)\Big(\frac{\sqrt{(T-t)^2+1}+1}{T-t}\overline{w}_t+\overline{w}_x+2\overline{w}_x\overline{w}_t\Big),\\
&&\quad \quad when \quad x=-1+\sqrt{1+(T-t)^2},\\
&&\frac{8\delta}{(1-\delta^2)^2(T-t)^2+4}+2j(t,x)\Big(\frac{2}{(1-\delta^2)(T-t)}\overline{w}_t+\frac{2\delta\overline{w}_x}{(1-\delta^2)(T-t)}+2\overline{w}_x\overline{w}_t\Big),\quad\quad when \quad \frac{x}{T-t}=\delta.
\end{array}
\right.
\endaligned
$$

Define
$$
\Bcal_R:=\{w\in\Ccal^s_{2}\quad | \quad\|w\|_{\Ccal^s_{2}}\leq R,\quad for~fixed~constants~R\in(0,1)~and~s\geq2\}.
$$

\begin{lemma}
Let $w\in\Bcal_R$. For $(t,x)\in\Bcal_t$, coefficients $a(t,x)$, $b(t,x)$, $c(t,x)$, $d(t,x)$, $e(t,x)$ and $j(t,x)$ of linearized equation (\ref{E3-2}) satisfy
\bel{E3-3}
\aligned
&|a(t,x)|\lesssim(1+\frac{1}{T-t})(1+|\overline{w}_x|+|\overline{w}_x|^2),\\
&|b(t,x)|\lesssim(1+\frac{1}{T-t})(1+|\overline{w}_t|+|\overline{w}_t|^2),\\
&|c(t,x)|\lesssim(1+\frac{1}{(T-t)^2})(1+|\overline{w}_t|+|\overline{w}_x|+|\overline{w}_{tx}|+|\overline{w}_{xx}|+|\overline{w}_x|^2+|\overline{w}_{xx}|^2),\\
& |d(t,x)|\lesssim1+\frac{1}{(T-t)^3}(1+|\overline{w}_x|+|\overline{w}_{tt}|+|\overline{w}_{tx}|+|\overline{w}_t|^2+|\overline{w}_x|^2+|\overline{w}_{tt}|^2+|\overline{w}_{tx}|^2),\\
&|e(t,x)|\lesssim 1+\frac{1}{T-t}(1+|w_t|+|w_x|+|w_t|^2+|w_x|^2),\quad |j(t,x)|\lesssim C,
\endaligned
\ee
where $C$ stands for a positive constant.
\end{lemma}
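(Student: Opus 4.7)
The plan is to estimate each of the six coefficients in (\ref{E3-2x}) pointwise by bounding $j(t,x)$, the remaining rational factors in $(T-t,x)$, and the polynomial expressions in the derivatives of $\overline{w}$ separately, then collecting terms.

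First I will dispense with the prefactor $j(t,x)$. Since $j$ is a ratio with numerator no larger than the denominator, $|j(t,x)|\le 1$ on all of $\Bcal_t$, which takes care of the last assertion in (\ref{E3-3}) and will be used uniformly in every subsequent estimate. Next I will record the basic geometric bound coming from the definition of $\Bcal_t$: since $0\le x\le\delta(T-t)$ with $\delta\sim 1$ but $\delta<1$, we have
\[
(T-t)^2-x^2\ \ge\ (1-\delta^2)(T-t)^2,
\]
so that all powers of the form $\frac{(T-t)^p x^q}{((T-t)^2-x^2)^r}$ that appear in (\ref{E3-2x}) are controlled by constant multiples of $(T-t)^{p+q-2r}$. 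This is the only singular mechanism present; in particular there is no singularity at the interior line $x=-1+\sqrt{1+(T-t)^2}$, and the listed pointwise values of $a,b,c,d,e,j$ on the three distinguished lines are harmless. It is exactly this observation that converts the complicated rational expressions in (\ref{E3-2x}) into the clean $(T-t)^{-n}$ powers appearing on the right of (\ref{E3-3}).

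With this in hand, I will walk through the six coefficients in turn. For $a$, the expression $1+j(\overline{w}_x^2+\tfrac{4\overline{w}_x(T-t)}{(T-t)^2-x^2})$ is immediately bounded by $1+|\overline{w}_x|^2+\tfrac{C|\overline{w}_x|}{T-t}$, which factorises as $(1+\tfrac{1}{T-t})(1+|\overline{w}_x|+|\overline{w}_x|^2)$. The coefficient $b$ is analogous, noting that the pure-rational term $\tfrac{((T-t)^2-x^2)^2-4x^2}{((T-t)^2-x^2)^2+4(T-t)^2}$ is bounded by $1$ in modulus. For $e$, the rational prefactor is bounded by a constant and the perturbation introduces only a $\tfrac{1}{T-t}$-type singularity, yielding the claimed bound. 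For $c$, the singular terms all have at most two factors of $(T-t)^2-x^2$ in the denominator, giving the $\tfrac{1}{(T-t)^2}$ weight; the cross products $|\overline{w}_t||\overline{w}_{xx}|$, $|\overline{w}_x||\overline{w}_{tx}|$ are absorbed by Young's inequality into the squared terms $|\overline{w}_x|^2+|\overline{w}_{xx}|^2$ (possibly at the cost of enlarging the implicit constant and using that $w\in\Bcal_R$ with $R<1$, so $|\overline{w}_\alpha|^n\lesssim|\overline{w}_\alpha|$ for $n\ge 1$). The coefficient $d$ is the worst case: the presence of $\tfrac{1}{((T-t)^2-x^2)^2}$ together with a numerator linear in $T-t$ produces the $\tfrac{1}{(T-t)^3}$ weight, and the remaining derivative-products are handled as for $c$.

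The only genuine nuisance is bookkeeping: matching the mixed products of derivatives of $\overline{w}$ arising from terms such as $2\overline{w}_{tt}\overline{w}_x$, $\overline{w}_x\overline{w}_{tx}$, or $\overline{w}_{tx}\overline{w}_t$ into the specific polynomial template on the right-hand side of (\ref{E3-3}). I will systematically apply Young's inequality $|AB|\le\tfrac12(A^2+B^2)$ on such cross products and then use $|\overline{w}_\alpha|\le R<1$ to dominate higher powers by the lowest power appearing in the template. The main (small) obstacle is to verify that every term in (\ref{E3-2x}) is indeed covered by the template after these reductions; once this is confirmed, the conclusion (\ref{E3-3}) follows directly by summing the pointwise estimates and taking the worst $(T-t)^{-n}$ weight occurring in each coefficient. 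No Sobolev, functional, or energy machinery is needed for this lemma; it is an entirely algebraic pointwise estimate.
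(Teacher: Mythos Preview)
Your proposal is correct and follows essentially the same approach as the paper: the paper's proof consists of a single paragraph noting that $j$ carries no singularity and then reading off the worst power of $(T-t)^{-1}$ in each coefficient, which is exactly what you do, only more explicitly. Your additional remarks (the bound $|j|\le 1$, the inequality $(T-t)^2-x^2\ge(1-\delta^2)(T-t)^2$ on $\Bcal_t$, and the use of Young together with $w\in\Bcal_R$ to push cross terms into the template) simply spell out what the paper leaves implicit.
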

\begin{proof}
We notice that singular term is $\frac{1}{T-t}$ when $t\rightarrow T^-$.
Observation from the form of coefficients $a(t,x)$, $b(t,x)$, $c(t,x)$, $d(t,x)$, $e(t,x)$ and $j(t,x)$, one can see there is no singular term in $j(t,x)$, and
the highest order of singular terms in $a(t,x)$, $b(t,x)$, $c(t,x)$, $d(t,x)$ and $e(t,x)$ are $\frac{1}{T-t}$, $\frac{1}{T-t}$,
$\frac{1}{(T-t)^2}$, $\frac{1}{(T-t)^3}$ and $\frac{1}{T-t}$, respectively. Thus those estimates hold.
\end{proof}

\subsection{Local existence of linearized equation in the hyperbolic domain}

We now derive energy estimates in the domain $\Omega_1=[0,-1+\sqrt{1+(T-t)^2}]$. In this domain, equation (\ref{E3-1}) is a hyperbolic equation which is degenerate at $x=-1+\sqrt{1+(T-t)^2}$. We supplement this hyperbolic equation with
 initial data
\bel{E3-5-0}
h(0,x)=h_0,\quad h_t(0,x)=h_1,
\ee
where the boundary $\partial\Omega_1=\Sigma_1\cup\Sigma_2$, and $\Sigma_1:=\{x=0\}$ and $\Sigma_2:=\{x=-1+\sqrt{1+(T-t)^2}\}$. 

By (\ref{E03-02}), we can set 
\bel{E3-5}
\aligned
&h(t,x)|_{x\in\Sigma_1}=0,\\
&h_t(t,x)|_{x\in\Sigma_2}=0.
\endaligned
\ee

Obviously, the coefficient $b(t,x)\equiv0$ at $x=-1+\sqrt{1+(T-t)^2}$, thus the linear wave equation (\ref{E3-1}) is degenerate at $x=-1+\sqrt{1+(T-t)^2}$. We follow the idea of Oleinik \cite{Ol} to deal with our case. One can see more applications of it in \cite{Han,N,YAM}.  Let $\overline{\Omega}_1:=[0,-1+\sqrt{1+(T-t)^2})$, then $b(t,x)>0$ in $\overline{\Omega}_1$.

\begin{lemma}
Let positive constant $s\geq2$. Assume that $f(t,x)\in\CC^2([0,\overline{T}];\HH^s(\overline{\Omega}_1))$ and $w\in\Bcal_R$.
Then
the solution $h(t,x)$ of the linearized equation (\ref{E3-2}) with initial data (\ref{E3-5-0}) and boundary condition (\ref{E3-5}) in the domain $\overline{\Omega}_1$ satisfies
$$
\int_{\overline{\Omega}_1}\Big(|h_t|^2+|h_x|^2\Big)dxdt
\lesssim\int_{\overline{\Omega}_1}\Big[|h_1|^2+|(h_0)_x|^2\Big]dx+\int_0^{\overline{T}}\int_{\overline{\Omega}_1}|f|^2dxdt,
\quad \forall t\in[0,\overline{T}].
$$
%\int_0^T|h|^2dx\leq
%\ee
\end{lemma}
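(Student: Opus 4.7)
The plan is a standard multiplier-energy argument adapted to the moving, degenerate boundary, in the spirit of Oleinik \cite{Ol} and its applications in \cite{Han,N,YAM}. Write $L(t) := -1 + \sqrt{1+(T-t)^2}$ so that $\overline{\Omega}_1 = [0, L(t))$, and define the natural energy
\begin{equation*}
E(t) := \frac{1}{2}\int_0^{L(t)} \bigl( a(t,x)\, h_t^2 + b(t,x)\, h_x^2 \bigr)\, dx.
\end{equation*}
By the smallness of $\overline{w} \in \Bcal_R$ and Lemma 3.1, on $[0,\overline{T}] \times \overline{\Omega}_1$ all coefficients of (\ref{E3-2}) are uniformly bounded (the restriction $\overline{T} < T$ tames every $(T-t)^{-k}$ factor), $a(t,x) \geq 1/2$, and $b(t,x)$ is strictly positive in the interior while vanishing to leading order on $\Sigma_2 = \{x = L(t)\}$.

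First I would multiply (\ref{E3-2}) by $h_t$ and integrate over $[0, L(t)]$. Using the identities $a h_{tt} h_t = \tfrac12 \del_t(a h_t^2) - \tfrac12 a_t h_t^2$, $-b h_{xx} h_t = -\del_x(b h_x h_t) + b_x h_x h_t + \tfrac12 \del_t(b h_x^2) - \tfrac12 b_t h_x^2$, and $-e h_{tx} h_t = -\tfrac12 \del_x(e h_t^2) + \tfrac12 e_x h_t^2$, combined with Leibniz's rule for the $t$-dependent interval, I obtain a bulk identity for $\tfrac{d}{dt}E(t)$ plus boundary contributions at $\Sigma_1=\{x=0\}$, $\Sigma_2$, and a moving-boundary piece proportional to $L'(t)$.

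Next I would eliminate these boundary contributions using (\ref{E3-5}): at $\Sigma_1$ the Dirichlet condition $h\equiv 0$ forces $h_t\equiv 0$, killing every flux at $x=0$; at $\Sigma_2$ the condition $h_t|_{\Sigma_2}=0$ kills the $b h_x h_t$, $e h_t^2$ and $a h_t^2$ terms, and the residual Leibniz contribution $\tfrac12 L'(t)\, b h_x^2\big|_{x=L(t)}$ is of size $O(R)$ thanks to the vanishing of the leading part of $b$ on $\Sigma_2$, hence absorbed via a trace inequality into $E(t)$. Young's inequality on the surviving cross term $(b_x + d)\, h_x h_t$ and on $f h_t$, together with the uniform coefficient bounds from Lemma 3.1, delivers the differential inequality
\begin{equation*}
\frac{d E}{dt} \lesssim E(t) + \|f(t,\cdot)\|_{\LL^2(\overline{\Omega}_1)}^2.
\end{equation*}
Gronwall's inequality followed by integration in $t$ over $[0,\overline{T}]$ then yields the claimed estimate, since $a \gtrsim 1$ controls the $|h_t|^2$ piece and $b$ is pointwise positive on compact subsets of $\overline{\Omega}_1 \setminus \Sigma_2$, while the boundary condition on $h_t$ closes the $|h_x|^2$ piece near $\Sigma_2$.

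The main obstacle is the rigorous treatment of the degenerate boundary $\Sigma_2$: justifying integration by parts and controlling the moving-boundary Leibniz term require Oleinik's device of approximating $\overline{\Omega}_1$ by a family of nondegenerate subintervals $[0, L(t) - \eta]$, deriving the estimate there with constants independent of $\eta$, and passing to the limit $\eta \to 0^+$; this, together with the smallness of $R$ needed to absorb the $b h_x^2$-type trace on $\Sigma_2$, is where the real technical work lies. All resulting constants depend on $\overline{\delta}$, $\delta$, and $R$ but are uniform in $t \in [0,\overline{T}]$.
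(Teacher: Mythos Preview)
Your approach is correct in outline but takes a genuinely different route from the paper. The paper does \emph{not} rely on the observation that $\overline T<T$ tames the singular coefficients and then close by Gronwall. Instead, it multiplies (\ref{E3-2}) by the \emph{weighted} multiplier $e^{-\nu/(T-t)^2}h_t$: differentiating the weight produces the extra term $\dfrac{2\nu\,a(t,x)}{(T-t)^3}\,|h_t|^2$ (and the analogous $\dfrac{2\nu\,b(t,x)}{(T-t)^3}\,|h_x|^2$), and for $\nu$ large these dominate every singular piece of $c$, $a_t$, $e_x$, $d$, $b_t$, $b_x$ identified in Lemma~3.1 and (\ref{E3-5R5}). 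One then obtains a \emph{coercive} differential inequality of the form
\[
\partial_t\bigl[e^{-\nu/(T-t)^2}(a|h_t|^2+b|h_x|^2)\bigr]-\partial_x(\cdots)+C_{R,\nu}\,e^{-\nu/(T-t)^2}\bigl(|h_t|^2+|h_x|^2\bigr)\lesssim e^{-\nu/(T-t)^2}|f|^2,
\]
and a single space--time integration (no Gronwall) yields the lemma.

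What each approach buys: your unweighted energy plus Gronwall is more elementary, and it does close because on $[0,\overline T]$ all coefficients are bounded; but the Gronwall constant is of order $\exp\bigl(C\,\overline\delta^{-3}\,\overline T\bigr)$, coming from the worst bound $|d|\lesssim(T-t)^{-3}$. The paper's weighted-multiplier device absorbs the singular lower-order terms structurally, yielding positivity of the zeroth-order coefficients \emph{pointwise} rather than merely boundedness; this is the mechanism that survives to the higher-order estimates (Lemma~3.3) and the elliptic region (Lemmas~3.5--3.6), where the same weight idea is reused with a larger exponent $\chi$. One further difference: you invoke Oleinik's $\eta$-approximation already at the a~priori stage to justify integration by parts at $\Sigma_2$, whereas the paper simply derives the identity formally here and postpones the approximation argument to the existence Lemma~3.4. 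Your handling of the moving-boundary Leibniz term via a trace inequality is the most delicate point you flag; note that since $L'(t)<0$ and $h_t|_{\Sigma_2}=0$, the only surviving piece is $\tfrac12 L'(t)\,b\,h_x^2|_{x=L(t)}$ with $b|_{\Sigma_2}=O(R)$, so the sign is not automatically favorable and your remark that smallness of $R$ is essential there is accurate.
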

\begin{proof}
Let $\nu$ be a positive constant.
Multiplying both sides of the linearized equation (\ref{E3-2}) with $e^{-\frac{\nu}{(T-t)^2}}h_t$, we get
\bel{E3-5R2}
\aligned
\frac{1}{2}\frac{\del}{\del t}\Big[e^{-\frac{\nu}{(T-t)^2}}a(t,x)|h_t|^2&+e^{-\frac{\nu}{(T-t)^2}}b(t,x)|h_{x}|^2\Big]-e^{-\frac{\nu}{(T-t)^2}}\frac{\del}{\del x}\Big(b(t,x)h_{t}h_x+\frac{1}{2}e(t,x)|h_t|^2\Big)\\
&\quad+e^{-\frac{\nu}{(T-t)^2}}\Big(-c(t,x)-\frac{1}{2}\frac{\del a(t,x)}{\del t}+\frac{2\nu a(t,x)}{(T-t)^3}+\frac{1}{2}\frac{\del e(t,x)}{\del x}\Big)|h_t|^2\\
&\quad+e^{-\frac{\nu}{(T-t)^2}}\frac{1}{2}\Big(-\frac{\del b(t,x)}{\del t}+\frac{2\nu b(t,x)}{(T-t)^3}\Big)|h_x|^2\\
&=-e^{-\frac{\nu}{(T-t)^2}}\Big(d(t,x)+\frac{\del b(t,x)}{\del x}\Big)h_xh_t+f(t,x)e^{-\frac{\nu}{(T-t)^2}}h_t.
\endaligned
\ee

Obviously, all of coefficients in (\ref{E3-5R2}) are bounded when $t$ is away from $T^{-}$. But there is singularity when $t$ is near $T^-$.
So we have to analyze the order of term $(T-t)^{-1}$. On one hand,
direct computations show that
\bel{E3-5R3}
\aligned
\frac{\del j(t,x)}{\del t}&=\frac{8(T-t)\Big(-2(T-t)^2+((T-t)^2-x^2)^2\Big)}{\Big(((T-t)^2-x^2)^2+4(T-t)^2\Big)^2},\\
\frac{\del j(t,x)}{\del x}&=\frac{-8x(T-t)^2((T-t)^2-x^2)}{\Big(((T-t)^2-x^2)^2+4(T-t)^2\Big)^2}.
\endaligned
\ee
So for $t$ near $T^-$, by (\ref{E3-5R3}), it is easy to check that 
$$
\aligned
&\Big|\frac{\del j(t,x)}{\del t}|_{x\in\partial\Omega_1}\Big|\lesssim 1+\frac{1}{T-t},\\
&\Big|\frac{\del j(t,x)}{\del x}|_{x\in\partial\Omega_1}\Big|\lesssim C.
\endaligned
$$
This means that the singular term of $j(t,x)$ in the boundary is at most $(T-t)^{-1}$. Moreover, one can check it is also the singular term inside $\Omega_1$.

On the other hand, it holds
\bel{E3-5R4}
\aligned
\frac{\del a(t,x)}{\del t}&=\frac{\del j(t,x)}{\del t}\Big(\overline{w}_x^2+\frac{4\overline{w}_x(T-t)}{(T-t)^2-x^2}\Big)\\
&+j(t,x)\Big(2\overline{w}_x\overline{w}_{tx}+\frac{4(\overline{w}_{tx}(T-t)-\overline{w}_x)((T-t)^2-x^2)+8(T-t)^2\overline{w}_x}{((T-t)^2-x^2)^2}\Big),\\
\frac{\del b(t,x)}{\del t}&=-\frac{16(T-t)((T-t)^2-x^2)^2}{\Big(((T-t)^2-x^2)^2+4(T-t)^2\Big)^2}-\frac{\del j(t,x)}{\del t}\Big(\overline{w}_t^2+\frac{4\overline{w}_t(T-t)}{(T-t)^2-x^2}\Big)\\
&-2j(t,x)\Big(\overline{w}_t\overline{w}_{tt}+\frac{2(\overline{w}_{tt}(T-t)-\overline{w}_t)((T-t)^2-x^2)+4(T-t)^2\overline{w}_t}{((T-t)^2-x^2)^2}\Big),\\
\frac{\del b(t,x)}{\del x}&=\frac{-8x\Big((T-t)^2((T-t)^2+2x^2-4)-3x^4\Big)}{\Big(((T-t)^2-x^2)^2+4(T-t)^2\Big)^2}\\
&-\frac{\del j(t,x)}{\del x}\Big(2\overline{w}_t\overline{w}_{tx}+\frac{4\overline{w}_{tx}(T-t)((T-t)^2-x^2)+16x(T-t)\overline{w}_t}{((T-t)^2-x^2)^2}\Big),
\endaligned
\ee
and
$$
\aligned
\frac{\del e(t,x)}{\del x}&=\frac{8\Big((T-t)((T-t)^2-x^2)((T-t)^2+3x^2)+32(T-t)^3\Big)}{\Big(((T-t)^2-x^2)^2+4(T-t)^2\Big)^2}\\
&+4\frac{\del j(t,x)}{\del x}\Big(\frac{(T-t)}{(T-t)^2-x^2}\overline{w}_t+\frac{x\overline{w}_x}{(T-t)^2-x^2}+\overline{w}_t\overline{w}_x\Big)\\
&+4j(t,x)\Big(\frac{2x(T-t)\overline{w}_t+(\overline{w}_x+x\overline{w}_{xx})((T-t)^2-x^2)+2x^2\overline{w}_x}{((T-t)^2-x^2)^2}\\
&+\frac{(T-t)\overline{w}_{tx}}{(T-t)^2-x^2}+(\overline{w}_{xx}\overline{w}_t+\overline{w}_x\overline{w}_{tx})\Big).
\endaligned
$$
Note that singualr terms may be higher at the boundary than inside of domain. Thus we analysize the boundary case. Using (\ref{E3-5R4}), we derive
\bel{E3-5R5}
\aligned
&\Big|\frac{\del a(t,x)}{\del t}|_{x\in\partial\Omega_1}\Big|\lesssim \Big(1+\frac{1}{(T-t)^2}\Big)\Big(1+|\overline{w}_x|+|\overline{w}_{tx}|+|\overline{w}_x|^2+|\overline{w}_{tx}|^2\Big),\\
&\Big|\frac{\del b(t,x)}{\del t}|_{x\in\partial\Omega_1}\Big|\lesssim \Big(1+\frac{1}{(T-t)^2}\Big)\Big(|\overline{w}_t|+|\overline{w}_{tt}|+|\overline{w}_t|^2+|\overline{w}_{tt}|^2\Big),\\
&\Big|\frac{\del b(t,x)}{\del x}|_{x\in\partial\Omega_1}\Big|\lesssim \Big(1+\frac{1}{(T-t)^2}\Big)\Big(1+|\overline{w}_t|+|\overline{w}_{tx}|+|\overline{w}_t|^2+|\overline{w}_{tx}|^2\Big),\\
&\Big|\frac{\del e(t,x)}{\del x}|_{x\in\partial\Omega_1}\Big|\lesssim \Big(1+\frac{1}{(T-t)^2}\Big)\Big(1+|\overline{w}_t|+|\overline{w}_x|+|\overline{w}_{xx}|+|\overline{w}_t|^2+|\overline{w}_x|^2+|\overline{w}_{tx}|^2+|\overline{w}_{xx}|^2\Big),
\endaligned
\ee
so by the same proof with Lemma 3.1, inequalities (\ref{E3-5R5}) also hold in $x\in\Omega_1$.

Furthermore, we use Young's inequality to derive
$$
\aligned
&\Big|(d(t,x)+\frac{\del b(t,x)}{\del x})h_xh_t\Big|\lesssim\frac{1}{2}\Big(|d(t,x)|+|\frac{\del b(t,x)}{\del x}|\Big)\Big(|h_x|^2+|h_t|^2\Big),\\
&\Big|f(t,x)h_t\Big|\lesssim\frac{1}{2}\Big(|h_t|^2+|f|^2\Big).
\endaligned
$$

Thus applying above two inequalities to (\ref{E3-5R2}), it holds
\bel{E3-5R6}
\aligned
&\frac{\del}{\del t}\Big[e^{-\frac{\nu}{(T-t)^2}}a(t,x)|h_t|^2+e^{-\frac{\nu}{(T-t)^2}}b(t,x)|h_{x}|^2\Big]-e^{-\frac{\nu}{(T-t)^2}}\frac{\del}{\del x}\Big(2b(t,x)h_{t}h_x+e(t,x)|h_t|^2\Big)\\
&\quad+e^{-\frac{\nu}{(T-t)^2}}\Big(-2c(t,x)-2\frac{\del a(t,x)}{\del t}+\frac{2\nu a(t,x)}{(T-t)^3}\\
&\quad+\frac{\del e(t,x)}{\del x}-|d(t,x)|-|\frac{\del b(t,x)}{\del x}|-1\Big)|h_t|^2\\
&\quad+e^{-\frac{\nu}{(T-t)^2}}\Big(-\frac{\del b(t,x)}{\del t}+\frac{2\nu b(t,x)}{(T-t)^3}-|d(t,x)|-|\frac{\del b(t,x)}{\del x}|\Big)|h_x|^2\\
&\lesssim e^{-\frac{\nu}{(T-t)^2}}|f|^2,
\endaligned
\ee

Note that $w\in\Bcal_R$.
 For a suitable small $R>0$ and a sufficient big $\nu>0$, by (\ref{E3-2x}), (\ref{E3-5R4})-(\ref{E3-5R5}), leading terms of
\bel{E3-5R7}
-2c(t,x)-2\frac{\del a(t,x)}{\del t}+\frac{2\nu a(t,x)}{(T-t)^3}+\frac{\del e(t,x)}{\del x}-|d(t,x)|-|\frac{\del b(t,x)}{\del x}|-1
\ee
and
\bel{E3-5R8}
-\frac{\del b(t,x)}{\del t}+\frac{2\nu b(t,x)}{(T-t)^3}-|d(t,x)|-|\frac{\del b(t,x)}{\del x}|
\ee
are 
$$\frac{2\nu}{(T-t)^3} \quad in \quad \frac{2\nu a(t,x)}{(T-t)^3}$$ 
and 
$$\frac{16(T-t)((T-t)^2-x^2)^2}{\Big(((T-t)^2-x^2)^2+4(T-t)^2\Big)^2}+\frac{((T-t)^2-x^2)^2-4x^2}{((T-t)^2-x^2)^2+4(T-t)^2}\cdot\frac{2\nu}{(T-t)^3}\quad in \quad -\frac{\del b(t,x)}{\del t}+\frac{2\nu b(t,x)}{(T-t)^3},$$ 
respectively.

Here we should notice that all terms containing $\overline{w}$ in $a(t,x)$, $b(t,x)$, $c(t,x)$, $d(t,x)$ and $e(t,x)$ are controlled by a positive constant $\frac{CR}{(T-t)^p}$ with constant $p\geq3$.
Those two leading terms are positive, so for a sufficient big $\nu>0$ can make (\ref{E3-5R7}) and (\ref{E3-5R8}) positive. 
Moreover,  for a sufficient small $R\ll1$ and sufficient big $\mu$ and $\nu$, it holds

\bel{E3-5R9}
\aligned
-2c(t,x)-2\frac{\del a(t,x)}{\del t}+\frac{2\nu a(t,x)}{(T-t)^3}&+\frac{\del e(t,x)}{\del x}-|d(t,x)|-|\frac{\del b(t,x)}{\del x}|-1\\
&>\frac{C}{(T-t)^3}-\frac{CR}{(T-t)^p}>C_{R,\nu}>0,
\endaligned
\ee
and
\bel{E3-5R10}
-\frac{\del b(t,x)}{\del t}+\frac{2\nu b(t,x)}{(T-t)^3}-|d(t,x)|-|\frac{\del b(t,x)}{\del x}|>\frac{C}{(T-t)^3}-\frac{CR}{(T-t)^p}>C_{R,\nu}>0,
\ee
where $C_{R,\nu}$ is a positive constant depending on $R$ and $\nu$.

%Furthermore, by (\ref{E3-3}) in Lemma 3.1 and (\ref{E3-5R5}), there is
%$$
%\aligned
%&|-c(t,x)-\frac{1}{2}\frac{\del a(t,x)}{\del t}+\frac{2\nu a(t,x)}{(T-t)^3}+\frac{1}{2}\frac{\del e(t,x)}{\del x}|\lesssim 1+\frac{1}{(T-t)^3},\\
%&|\frac{\del b(t,x)}{\del t}-\frac{2\nu b(t,x)}{(T-t)^3}|\lesssim 1+\frac{1}{(T-t)^3}.
%\endaligned
%$$
Thus by noticing (\ref{E3-5R9})-(\ref{E3-5R10}), inequality (\ref{E3-5R6}) leads to 
\bel{E3-5R11}
\aligned
\frac{\del}{\del t}\Big[a(t,x)|h_t|^2+b(t,x)|h_{x}|^2\Big]&-\frac{\del}{\del x}\Big(2b(t,x)h_{t}h_x+e(t,x)|h_t|^2\Big)\\
&+C_{R,\nu}\Big(|h_t|^2+|h_x|^2\Big)\lesssim |f|^2,\quad \forall t\in[0,\overline{T}].
\endaligned
\ee

Integrating (\ref{E3-5R11}) over $\overline{\Omega}_1$, and using the boundary condition (\ref{E3-5}), it holds
$$
\frac{\del}{\del t}\int_{\overline{\Omega}_1}\Big[a(t,x)|h_t|^2+b(t,x)|h_{x}|^2\Big]+C_{R,\nu}\int_{\overline{\Omega}_1}\Big(|h_t|^2+|h_x|^2\Big)\lesssim|f|^2,
$$
so integrating above inequality over $[0,t]$ with $t\in(0,\overline{T}]$, and noticing that $a(t,x)$ and $b(t,x)$ are bounded in $[0,\overline{T}]\times\overline{\Omega}_1$, it holds
$$
\int_{\overline{\Omega}_1}\Big(|h_t|^2+|h_x|^2\Big)dxdt\lesssim\int_{\overline{\Omega}_1}\Big[|h_1|^2+|(h_0)_x|^2\Big]dx+\int_0^{\overline{T}}\int_{\overline{\Omega}_1}|f|^2dxdt.
$$
\end{proof}

In order to obtain higher order energy estimates, we consider the equation of the $x$-derivatives of $h$. For a fixed $2\leq k\leq s$, 
applying $\partial^{k+1}=\partial_t\partial_x^k$ to both sides of (\ref{E3-2}) to get
\bel{E3-6}
a(t,x)\partial_{tt}\partial^{k+1}h-b(t,x)\partial_{xx}\partial^{k+1}h-c(t,x)\partial_t\partial^{k+1}h+d(t,x)\partial_x\partial^{k+1}h-e(t,x)\partial_{tx}\partial^{k+1}h=\textbf{f}_k,
\ee
where $k+1=k_1+k_2$ with $1\leq k_1\leq k+1$ and $0\leq k_2\leq k$, and
\bel{E3-6R1}
\aligned
\textbf{f}_k&:=\partial^{k+1}f-\sum_{k+1=k_1+k_2}\partial^{k_1}a(t,x)\partial_{tt}\partial^{k_2}h+\sum_{k+1=k_1+k_2}\partial^{k_1}b(t,x)\partial_{xx}\partial^{k_2}h\\
&\quad+\sum_{k+1=k_1+k_2}\partial^{k_1}c(t,x)\partial_{t}\partial^{k_2}h-\sum_{k+1=k_1+k_2}\partial^{k_1}d(t,x)\partial_{x}\partial^{k_2}h+\sum_{k+1=k_1+k_2}\partial^{k_1}e(t,x)\partial_{tx}\partial^{k_2}h.
\endaligned
\ee

\begin{lemma}
Let positive constant $2\leq k\leq s$. Assume that $f(t,x)\in\CC^2([0,\overline{T}];\HH^k(\overline{\Omega}_1))$ and $w\in\Bcal_R$.
Then
the solution $h(t,x)$ of the linearized equation (\ref{E3-2}) with initial data (\ref{E3-5-0}) and boundary condition (\ref{E3-5}) in the domain $\overline{\Omega}_1$ satisfies
\bel{XX1-2}
\aligned
\int_{\overline{\Omega}_1}(|\del_t\partial^{k+1}h|^2+|\del_x\partial^{k+1}h|^2)dxdt
&\lesssim\int_{\overline{\Omega}_1}\Big[|\partial^{k+1}h_1|^2+|\del_x\partial^{k+1}h_0|^2\Big]dx\\
&+\int_0^{\overline{T}}\int_{\overline{\Omega}_1}|\partial^{k+1}f|^2dxdt,\quad \forall t\in[0,\overline{T}].
\endaligned
\ee
\end{lemma}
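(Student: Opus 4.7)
The plan is to reproduce the energy argument of Lemma~3.2 applied to the differentiated equation (\ref{E3-6}), whose principal part has exactly the same structure as (\ref{E3-2}). First I would multiply (\ref{E3-6}) by $e^{-\frac{\nu}{(T-t)^2}}\del_t\del^{k+1}h$ and integrate over $\overline{\Omega}_1$, obtaining the higher-order analogue of identity (\ref{E3-5R2}) with the same divergence structure $\del_x\bigl(2b\,\del_t\del^{k+1}h\,\del_x\del^{k+1}h + e|\del_t\del^{k+1}h|^2\bigr)$ and the same coercive combinations (\ref{E3-5R7})-(\ref{E3-5R8}) multiplying $|\del_t\del^{k+1}h|^2$ and $|\del_x\del^{k+1}h|^2$. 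Because the coefficients $a,b,c,d,e$ are unchanged, the positivity estimates (\ref{E3-5R9})-(\ref{E3-5R10}) transfer verbatim once $\nu$ is taken large and $R$ small.

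The next step is to check that the boundary integrals produced by integration by parts still vanish. On $\Sigma_2$ the degeneracy $b(t,x)\equiv 0$ annihilates the leading boundary term, and the condition $h_t|_{\Sigma_2}=0$ differentiates tangentially in $t$ to give $\del_t^{j+1}h|_{\Sigma_2}=0$, which handles the $e|\del_t\del^{k+1}h|^2$ trace. On $\Sigma_1=\{x=0\}$, the Dirichlet trace $h|_{\Sigma_1}=0$ yields $\del_t^j h|_{\Sigma_1}=0$ for every $j\geq 1$; the pure $\del_x$-derivatives of $h$ at $x=0$ are then obtained from compatibility conditions on the initial data and by inverting the equation (\ref{E3-2}) to express normal derivatives in terms of tangential ones, so the required traces of $\del^{k+1}h$ at $\Sigma_1$ vanish.

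The main obstacle is controlling the commutator forcing $\textbf{f}_k$ of (\ref{E3-6R1}). Differentiating the explicit formulas (\ref{E3-2x}) in the spirit of (\ref{E3-5R4})-(\ref{E3-5R5}), and using $w\in\Bcal_R$ with $R\ll 1$, each mixed coefficient derivative $\del^{k_1}a, \del^{k_1}b, \del^{k_1}c, \del^{k_1}d, \del^{k_1}e$ with $1\leq k_1 \leq k+1$ admits a pointwise bound of the form
$$
\lesssim (T-t)^{-p_{k_1}}\bigl(1 + \mathcal{P}_{k_1}(\overline w, \del \overline w, \ldots)\bigr),
$$
where $\mathcal{P}_{k_1}$ is a polynomial in derivatives of $\overline w$ of order at most $k_1$ controlled by $R$. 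Each term of $\textbf{f}_k - \del^{k+1}f$ is therefore pointwise bounded by $CR(T-t)^{-p}|\del^{\leq k+2}h|$, and the coercivity gains $\frac{2\nu a}{(T-t)^3}$ and $\frac{2\nu b}{(T-t)^3}$ appearing in the energy identity absorb every such contribution once $\nu$ is chosen sufficiently large, exactly as in (\ref{E3-5R9})-(\ref{E3-5R10}). The top-order residual $|\del^{k+1}f|^2$ is then disposed of by Young's inequality as in the passage to (\ref{E3-5R6}), while strictly lower-order derivatives of $h$ appearing in $\textbf{f}_k$ are controlled inductively in $k$, with the base case supplied by Lemma~3.2.

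Finally, integrating the resulting differential inequality over $[0,t]$ with $t\in(0,\overline T]$, noticing that the weight $e^{-\frac{\nu}{(T-t)^2}}$ is uniformly bounded above and below on the compact time interval $[0,\overline T]$, and closing the induction on $k$ yields the desired estimate (\ref{XX1-2}).
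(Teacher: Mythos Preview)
Your overall architecture---differentiate, multiply by a weighted time derivative, and absorb the commutator $\textbf{f}_k$---is right, but there is a real gap in the absorption step, and it is exactly the point where the paper's proof deviates from a straight rerun of Lemma~3.2.

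You keep the weight $e^{-\nu/(T-t)^2}$, so the coercive gain you extract is of order $(T-t)^{-3}$, and you then assert that every commutator contribution in $\textbf{f}_k-\partial^{k+1}f$ is bounded by $CR(T-t)^{-p}|\partial^{\le k+2}h|$. That claim is false: the coefficients $a,b,c,d,e$ in (\ref{E3-2x}) each contain an $\overline w$-\emph{independent} principal part (for instance the first summand in $c(t,x)$), and when you apply $\partial^{k_1}$ with $1\le k_1\le k+1$ to that part you produce singular factors of order up to $(T-t)^{-(4+k)}$ carrying \emph{no} $R$. Hence the commutator puts a term like $C(T-t)^{-(4+k)}|\partial_t\partial^{k+1}h|^2$ on the right-hand side that your $(T-t)^{-3}$ coercivity cannot dominate as $t$ approaches $\overline T\in[T-\overline\delta,T)$, no matter how large $\nu$ is. The smallness of $R$ does not help here, because the offending terms come from the explicit background, not from $\overline w$.

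The paper fixes precisely this by upgrading the multiplier to $e^{-\nu/(T-t)^{\chi}}\partial_t\partial^{k+1}h$ with $\chi+1\ge k+4$ and $\nu>4(k+1)$, so that the coercive term becomes $\tfrac{\chi\nu\,a(t,x)}{(T-t)^{\chi+1}}$ and now dominates the $(T-t)^{-(4+k)}$ commutator losses recorded in (\ref{E3-5RR4}). A second point you glossed over is the term $\sum\partial^{k_1}a\,\partial_{tt}\partial^{k_2}h\cdot\partial_t\partial^{k+1}h$: this involves a second time derivative not controlled by the energy, and the paper handles it separately via the elementary bound $x^{a}e^{-x}\le(a/e)^{a}$ against the exponential weight, cf.\ (\ref{E3-5RR5}). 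Your proposal does not isolate this term. Once these two modifications are made, the rest of your outline (boundary traces, Young's inequality for $\partial^{k+1}f$, integrating in time) goes through as you describe.
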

\begin{proof}
Let $\nu$ and $\chi$ be two positive constants.
Multiplying both sides of the linearized equation (\ref{E3-6}) with $e^{-\frac{\nu}{(T-t)^{\chi}}}\del_t\del^{k+1}h$, we get
\bel{E3-5RR2}
\aligned
&\frac{1}{2}\frac{\del}{\del t}\Big[e^{-\frac{\nu}{(T-t)^{\chi}}}a(t,x)|\del_t \partial^{k+1}h|^2+e^{-\frac{\nu}{(T-t)^{\chi}}}b(t,x)|\del_x\partial^{k+1}h|^2\Big]\\
&\quad-e^{-\frac{\nu}{(T-t)^{\chi}}}\frac{\del}{\del x}\Big(b(t,x)\del_t\partial^{k+1}h\del_x\partial^{k+1}h+\frac{1}{2}e(t,x)|\del_t\partial^{k+1}h|^2\Big)\\
&\quad+e^{-\frac{\nu}{(T-t)^{\chi}}}\Big(-c(t,x)-\frac{1}{2}\frac{\del a(t,x)}{\del t}+\frac{\chi\nu a(t,x)}{(T-t)^{\chi+1}}+\frac{1}{2}\frac{\del e(t,x)}{\del x}\Big)|\del_t\partial^{k+1}h|^2\\
&\quad+e^{-\frac{\nu}{(T-t)^{\chi}}}\frac{1}{2}\Big(-\frac{\del b(t,x)}{\del t}+\frac{\chi\nu b(t,x)}{(T-t)^{\chi+1}}\Big)|\del_x\partial^{k+1}h|^2\\
&=-e^{-\frac{\nu}{(T-t)^{\chi}}}\Big(d(t,x)+\frac{\del b(t,x)}{\del x}\Big)\del_x\partial^{k+1}h\del_t\partial^{k+1}h+\textbf{f}_ke^{-\frac{\nu}{(T-t)^{\chi}}}\del_t\partial^{k+1}h.
\endaligned
\ee

We now estimate the right hand side of (\ref{E3-5RR2}) one by one. We use Young's inequality and (\ref{E3-6R1}) to derive
\bel{E3-5RR3}
\aligned
&\Big|\Big(d(t,x)+\frac{\del b(t,x)}{\del x}\Big)\del_x\partial^{k+1}h\del_t\partial^{k+1}h\Big|\leq\frac{1}{2}\Big(d(t,x)+\frac{\del b(t,x)}{\del x}\Big)\Big(|\del_x\partial^{k+1}h|^2+|\del_t\partial^{k+1}h|^2\Big),\\
&\Big|\partial^{k+1}f\del_t\partial^{k+1}h\Big|\leq\frac{1}{2}\Big(|\partial^{k+1}f|^2+|\del_t\partial^{k+1}h|^2\Big).
\endaligned
\ee

We notice that coefficients $a(t,x)$, $b(t,x)$, $c(t,x)$, $d(t,x)$ and $e(t,x)$ contain singular term $\frac{1}{T-t}$ with different orders. So the derivatives of them will increase the order of singular terms.
By Lemma 3.1, Young's inequality and Sobolev embedding $\HH^{k+1}\subset\HH^{k}$ with $k\geq2$, 
for $k+1=k_1+k_2$ with $1\leq k_1\leq k+1$ and $0\leq k_2\leq k$, we integrate by part to get
\bel{E3-5RR4}
\aligned
&\Big|\sum_{k+1=k_1+k_2}\partial^{k_1}c(t,x)\partial_{t}\partial^{k_2}h\del_t\partial^{k+1}h\Big|\lesssim(k+1)\Big(1+\frac{1}{(T-t)^{4+k}}\Big)|\del_t\partial^{k+1}h|^2,\\
&\Big|\sum_{k+1=k_1+k_2}\partial^{k_1}d(t,x)\partial_{x}\partial^{k_2}h\del_t\partial^{k+1}h\Big|\lesssim \frac{k+1}{2}\Big(1+\frac{1}{(T-t)^{4+k}}\Big)\Big(|\del_t\partial^{k+1}h|^2+|\partial^{k+1}h|^2\Big),\\
&\Big|\sum_{k+1=k_1+k_2}\partial^{k_1}e(t,x)\partial_{tx}\partial^{k_2}h\del_t\partial^{k+1}h\Big|\lesssim(k+1)\Big(1+\frac{1}{(T-t)^{3+k}}\Big)|\del_t\partial^{k+1}h|^2,\\
&\Big|\sum_{k+1=k_1+k_2}\partial^{k_1}b(t,x)\partial_{xx}\partial^{k_2}h\del_t\partial^{k+1}h\Big|\lesssim(k+1)\Big(1+\frac{1}{(T-t)^{1+k}}\Big)|\del_t\partial^{k+1}h|^2.
\endaligned
\ee

On the other hand, by inequality $x^ae^{-x}\leq(\frac{a}{e})^a$ with $x>0,a>0$, and integration by parts, there exists a postive constant $C_{k,\nu}$ depending on $k$ and $\nu$ such that
\bel{E3-5RR5}
\sum_{k+1=k_1+k_2}\int_0^T\int_{\Omega_1}e^{-\frac{\nu}{(T-t)^{\chi}}}\partial^{k_1}a(t,x)\partial_{tt}\partial^{k_2}h\del_t\partial^{k+1}h\lesssim C_{k,\nu}\int_0^T\int_{\Omega_1}|\del_t\partial^{k+1}h|^2.
\ee

Thus integrating (\ref{E3-5RR2}) over $[0,t]\times\overline{\Omega}_1$ with $t\in(0,\overline{T}]$, and using (\ref{E3-5RR3})-(\ref{E3-5RR5}), it holds
\bel{E3-5RR6}
\aligned
&\int_{\overline{\Omega}_1}e^{-\frac{\nu}{(T-t)^{\chi}}}\Big(-c(t,x)-\frac{1}{2}\frac{\del a(t,x)}{\del t}+\frac{\chi\nu a(t,x)}{(T-t)^{\chi+1}}+\frac{1}{2}\frac{\del e(t,x)}{\del x}\\
&\quad-4(k+1)(1+\frac{1}{(T-t)^{4+k}})-1\Big)|\del_t\partial^{k+1}h|^2dxdt\\
&\quad+\int_{\overline{\Omega}_1}e^{-\frac{\nu}{(T-t)^{\chi}}}\frac{1}{2}\Big(-\frac{\del b(t,x)}{\del t}+\frac{\chi\nu b(t,x)}{(T-t)^{\chi+1}}-(k+1)(1+\frac{1}{(T-t)^{4+k}})\Big)|\del_x\partial^{k+1}h|^2dxdt\\
&\lesssim\int_{\overline{\Omega}_1}\Big[|\del_t \partial^{k+1}h_0|^2+|\del_x\partial^{k+1}h_0|^2\Big]dx+
\int_0^{\overline{T}}\int_{\overline{\Omega}_1}|\partial^{k+1}f|^2dxdt.
\endaligned
\ee

Note that $w\in\Bcal_R$. So 
for a sufficient big $\nu>4(k+1)$ and $\chi+1\geq k+4$, the term $\frac{C\chi\nu}{(T-t)^{\chi+1}}-\frac{CR}{(T-t)^p}$ with constant $p\geq\chi+1$ is the leading term of 
$$
-c(t,x)-\frac{1}{2}\frac{\del a(t,x)}{\del t}+\frac{\chi\nu a(t,x)}{(T-t)^{\chi+1}}+\frac{1}{2}\frac{\del e(t,x)}{\del x}-4(k+1)(1+\frac{1}{(T-t)^{4+k}})-1,
$$
and
$$
-\frac{\del b(t,x)}{\del t}+\frac{\chi\nu b(t,x)}{(T-t)^{\chi+1}}-(k+1)(1+\frac{1}{(T-t)^{4+k}}).
$$
Furthermore, for a sufficient small $R\ll1$ and sufficient big $\mu$ and $\nu$, it holds
$$
\aligned
&-c(t,x)-\frac{1}{2}\frac{\del a(t,x)}{\del t}+\frac{\chi\nu a(t,x)}{(T-t)^{\chi+1}}+\frac{1}{2}\frac{\del e(t,x)}{\del x}-4(k+1)(1+\frac{1}{(T-t)^{4+k}})-1\\
&\quad\geq\frac{C_{\nu,k}}{(T-t)^{\chi+1}}-\frac{CR}{(T-t)^p}>C_{R,\nu,\chi}>0,\\
&-\frac{\del b(t,x)}{\del t}+\frac{\chi\nu b(t,x)}{(T-t)^{\chi+1}}-(k+1)(1+\frac{1}{(T-t)^{4+k}})\\
&\quad\geq\frac{C_{\nu,k}}{(T-t)^{\chi+1}}-\frac{CR}{(T-t)^p}>C_{R,\nu,\chi}>0,
\endaligned
$$
where $C_{R,\nu,\chi}$ is a positive constant depending on $R$, $\nu$ and $\chi$.

Hence, it follows from (\ref{E3-5RR6}) that
\bel{XX1-3}
\int_{\overline{\Omega}_1}(|\del_t\partial^{k+1}h|^2+|\del_x\partial^{k+1}h|^2)dxdt\lesssim\int_{\overline{\Omega}_1}\Big[|\del_t \partial^{k+1}h_0|^2+|\del_x\partial^{k+1}h_0|^2\Big]dx+\int_0^{\overline{T}}\int_{\overline{\Omega}_1}|\partial^{k+1}f|^2dxdt,
\ee
which means that (\ref{XX1-2}) holds.
\end{proof}

We now follow \cite{Ol}  to give the local existence of solution for linear equation (\ref{E3-2}) in the domain $\Omega_1$.

\begin{lemma}
Let positive constant $\overline{\delta}\ll1$. Assume that $f(t,x)\in\CC^2([0,\overline{T}];\HH^s(\Omega_1))$ and $w\in\Bcal_R$.
Then for a fixed constant $s\geq2$,
equation (\ref{E3-2}) with initial data (\ref{E3-5}) and boundary condition (\ref{E3-5-0}) admits a unique solution
$$
h(t,x)\in\Ccal^s_{2}:=\bigcap_{i= 0}^2\CC^i([0,\overline{T}];\HH^{s-i}(\Omega_1)).
$$
 Moreover, it holds
\bel{E3-7}
\|h(t,x)\|_{\Ccal^s_{2}}\leq\|(h_0,h_1)\|_{\HH^s(\Omega_1)\times\HH^{s-1}(\Omega_1)}+\|f(t,x)\|_{\Ccal^s_{2}}.
\ee
\end{lemma}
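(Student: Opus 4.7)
The plan is to combine the a priori estimates from Lemmas 3.2 and 3.3 with a standard approximation argument in the spirit of Oleinik \cite{Ol} in order to build a solution and then promote it into $\Ccal^s_2$ using the equation itself.

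First, I would introduce a non-degenerate regularization, for instance replacing $b(t,x)$ by $b_\sigma(t,x):=b(t,x)+\sigma$ for $\sigma>0$, so that on $\overline{\Omega}_1$ the regularized operator is strictly hyperbolic with smooth coefficients (the bounds from Lemma 3.1 still hold uniformly in $\sigma$, and the coefficients of the perturbations of $a,c,d,e$ need not be touched). Classical theory of linear hyperbolic equations with Dirichlet-type data on $\Sigma_1$ and the characteristic-type condition on $\Sigma_2$ (see \cite{Ol}) produces a smooth solution $h^{\sigma}$ of the regularized problem with initial data $(h_0,h_1)$ and source $f$, for each fixed $\sigma>0$. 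The crucial observation is that both Lemma 3.2 and Lemma 3.3, as we inspect their proofs, only exploit the weighted multiplier $e^{-\nu/(T-t)^\chi}h_t$ or $e^{-\nu/(T-t)^\chi}\partial_t\partial^{k+1}h$ together with the lower bounds (\ref{E3-5R9})-(\ref{E3-5R10}); these are robust under the replacement $b\mapsto b_\sigma$ as long as $R$ is kept small and $\nu,\chi$ sufficiently large. Hence the estimates
$$
\int_0^t\!\!\int_{\overline{\Omega}_1}\bigl(|\partial_t\partial^{k+1}h^\sigma|^2+|\partial_x\partial^{k+1}h^\sigma|^2\bigr)\,dxdt'\lesssim \|(h_0,h_1)\|_{\HH^s\times\HH^{s-1}}^2+\|f\|_{\Ccal^s_2}^2
$$
hold uniformly in $\sigma\in(0,1]$ and in $t\in[0,\overline{T}]$, for every $0\leq k\leq s-1$.

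Second, the uniform bounds allow extraction of a weakly-$*$ converging subsequence $h^{\sigma_n}\rightharpoonup h$ in $\LL^\infty([0,\overline{T}];\HH^s(\Omega_1))$ with $\partial_t h^{\sigma_n}\rightharpoonup \partial_t h$ in $\LL^\infty([0,\overline{T}];\HH^{s-1}(\Omega_1))$. By the Aubin-Lions lemma applied on compact subsets of $\overline{\Omega}_1$, one gets strong convergence of $h^{\sigma_n}$ in $\CC^0([0,\overline{T}];\HH^{s-1}_{loc})$, which is enough to pass to the limit in the linear equation (\ref{E3-2}) in the distributional sense and to verify that the limit $h$ attains the prescribed initial and boundary values. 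To upgrade $\partial_t^2 h$ into $\CC^0([0,\overline{T}];\HH^{s-2}(\Omega_1))$ I would use the equation itself: after solving (\ref{E3-2}) algebraically for $\partial_t^2 h$ in terms of $(\partial_t h,\partial_x h,\partial_{tx}h,\partial_{xx}h,f)$, Lemma 3.1 combined with the already established regularity of $h$ in $\CC^0\HH^s\cap\CC^1\HH^{s-1}$ yields $\partial_t^2 h\in \CC^0([0,\overline{T}];\HH^{s-2}(\Omega_1))$ together with the quantitative bound appearing in (\ref{E3-7}). The continuity in time for $h$ and $\partial_t h$ in their natural spaces follows from the standard weak-strong continuity argument using the energy identity.

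For uniqueness, let $h^{(1)},h^{(2)}$ be two solutions with the same data, and apply Lemma 3.2 to $h=h^{(1)}-h^{(2)}$, which solves (\ref{E3-2}) with $f\equiv 0$ and zero initial/boundary data; the lemma forces $h_t=h_x=0$ on $\overline{\Omega}_1$, whence $h\equiv 0$. I expect the main obstacle to be the simultaneous management of the spatial degeneracy of $b$ along $x=-1+\sqrt{1+(T-t)^2}$ and the time singularity $(T-t)^{-1}$ in the coefficients: this is what forces the exponential weight $e^{-\nu/(T-t)^\chi}$ with $\chi+1\geq k+4$ in Lemma 3.3, and one has to be careful that the regularization $b\mapsto b+\sigma$ does not spoil the positivity arguments (\ref{E3-5R9})-(\ref{E3-5R10}), but since $\sigma>0$ only strengthens the lower bound on $b$, this is not an issue. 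The genuine technical check is verifying that the boundary flux $\bigl(2b(t,x)h_t h_x+e(t,x)|h_t|^2\bigr)\bigr|_{\partial\Omega_1}$ is non-positive after integration, using (\ref{E3-5}) at $\Sigma_1$ and the vanishing of $b$ at $\Sigma_2$; this ensures no information has to be prescribed on $\Sigma_2$ and the energy inequality closes, exactly as in the Oleinik framework.
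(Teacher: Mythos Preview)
Your proposal is correct and follows essentially the same approach as the paper: regularize $b\mapsto b+\sigma$ to obtain a strictly hyperbolic problem, solve it classically, invoke the $\sigma$-uniform a priori estimates of Lemmas~3.2--3.3, and pass to the limit in the Oleinik framework. In fact your write-up supplies more of the standard details (weak compactness, Aubin--Lions, recovery of $\partial_t^2 h$ from the equation, uniqueness) than the paper's own rather terse proof, which additionally invokes compact support of $f$ and finite propagation speed to localize $h^\sigma$.
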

\begin{proof}
Let $\kappa$ be a small positve constant. Assume that $f(t,x)$ a compact support in $\Omega_1$.
Consider the approximation equation
\bel{WW01-1}
a(t,x)h_{tt}-b^{\kappa}(t,x)h_{xx}-c(t,x)h_t+d(t,x)h_x-e(t,x)h_{tx}=f(t,x),
\ee
where $(t,x)\in[0,\overline{T}]\times\Omega_1$, and
$$
b^{\kappa}(t,x)=b(t,x)+\kappa>0.
$$

Equation (\ref{WW01-1}) is a strictly linear hyperbolic equation. 
Hence it admits a local $\HH^s$-solution $h^{\kappa}$ with $t\in[0,\overline{T}]$, and it satisfies (\ref{XX1-2}). Meanwhile, by the property of propagation at finite speed, $h^{\kappa}$ is 
of compact support in $\Omega_1$. As in \cite{Ol}, since the right hand side of inequality (\ref{XX1-2}) is independent of $\kappa$, we can
take $h\in\Ccal_2^s$ such that $h^{\kappa}\rightarrow h$ in $\Ccal_2^s$ as $\kappa\rightarrow0$ with a fixed constant $s\geq2$. Therefore, $h$ is the solution of (\ref{E3-2}), and (\ref{XX1-2}) 
remain valid for a limiting function $h$. This gives (\ref{E3-7}).
\end{proof}

\subsection{Local existence of linearized equation in the elliptic domain}

We now consider linear equation (\ref{E3-2}) in the domain 
$$
\Dcal:=\{(t,x)| (t,x)\in[0,\overline{T}]\times\Omega_2\},
$$
where
$$
\Omega_2:=[-1+\sqrt{1+(T-t)^2},\delta(T-t)].
$$
In this case,  equation (\ref{E3-2}) becomes a degenerate linear elliptic-type equation with singular coefficients
as follows
\bel{E3-8}
a(t,x)h_{tt}+\tilde{b}(t,x)h_{xx}-c(t,x)h_t+d(t,x)h_x-e(t,x)h_{tx}=f(t,x),
\ee
where $a(t,x)$, $c(t,x)$, $d(t,x)$ and $e(t,s)$ are given in (\ref{E3-2x}), and
$$
\tilde{b}(t,x):=\frac{4x^2-((T-t)^2-x^2)^2}{((T-t)^2-x^2)^2+4(T-t)^2}-j(t,x)(\overline{w}_t^2+\frac{4\overline{w}_t(T-t)}{(T-t)^2-x^2}),
$$
for $t$ near $T^{-}$, which has the property
\bel{E3-8RR1}
|\tilde{b}(t,x)|\lesssim(1+\frac{1}{T-t})(1+|\overline{w}_t|+|\overline{w}_t|^2),
\ee
and the first term of $\tilde{b}$ satisfies
$$
\frac{4x^2-((T-t)^2-x^2)^2}{((T-t)^2-x^2)^2+4(T-t)^2}>0.
$$

We notice that the elliptic domain is a closed domain.
The degenerate line is at the boundary
$x=-1+\sqrt{1+(T-t)^2}$. The boundary of $\Dcal$ is denoted by
$$
\del\Dcal:=\{t=0\}\cup\Sigma_3\cup\Sigma_4,
$$
where
$$
\aligned
&\Sigma_3:=\{x=-1+\sqrt{1+(T-t)^2}\},\\
&\Sigma_4:=\{x=\delta(T-t)\}.
\endaligned
$$
Moreover, it follows from (\ref{E03-02}) that
\bel{E3-9}
\aligned
&h(t,x)|_{t=0}=h_0,\\
&h_t(t,x)|_{x\in\Sigma_3}=0,\\
&h(t,x)|_{x\in\Sigma_4}=0.
\endaligned
\ee
We remark that the boundary condition (\ref{E3-9}) is a mixed-type boundary condition. Thus all of boundary lines for elliptic domain admit suitable datas. The elliptic equations with mixed-type boundary condition has been studied in \cite{Chen,ChenG, Lieb}. We will solve equation (\ref{E3-8}) with mixed-type boundary condition (\ref{E3-9}). Firstly, we derive some priori estimates of solutions.

\begin{lemma}
Let positive constant $s\geq2$.  Assume that $f(t,x)\in\HH^s(\Dcal)$ and $w\in\Bcal_R$.
Then 
the solution $h(t,x)$ of the linearized equation (\ref{E3-8}) with initial data (\ref{E3-5-0}) and boundary condition  (\ref{E3-9})  in the domain $\Dcal$ satisfies
$$
\int_{\Omega_2}(|h_t|^2+|h_x|^2)dxdt\lesssim\int_{\Omega_2}(|h_1|^2+|(h_0)_x|^2)dx+\int_0^{\overline{T}}\int_{\Omega_2}|f|^2dxdt.
$$
\end{lemma}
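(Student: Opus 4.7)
The plan is to mimic the weighted-energy method of Lemma 3.2 with the multiplier $e^{-\nu/(T-t)^2}h_t$. The crucial structural difference is that the sign in front of $h_{xx}$ has flipped: here we have $+\tilde b\,h_{xx}$ instead of $-b\,h_{xx}$. Consequently the natural energy $a|h_t|^2+b|h_x|^2$ becomes the indefinite quadratic form $a|h_t|^2-\tilde b|h_x|^2$, and the entire argument is designed to compensate for this indefiniteness through the geometry of the thin elliptic strip $\Omega_2$ and the degeneracy $\tilde b|_{\Sigma_3}=0$.

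Multiplying (\ref{E3-8}) by $e^{-\nu/(T-t)^2}h_t$ and rewriting in conservation form yields
\begin{align*}
&\tfrac{1}{2}\del_t\!\bigl[e^{-\nu/(T-t)^2}(a|h_t|^2-\tilde b|h_x|^2)\bigr]
-\del_x\!\bigl[e^{-\nu/(T-t)^2}(\tilde b\,h_t h_x-\tfrac{1}{2}e|h_t|^2)\bigr]\\
&\quad+e^{-\nu/(T-t)^2}\bigl(\tfrac{\nu a}{(T-t)^3}-\tfrac{1}{2}\del_t a-c+\tfrac{1}{2}\del_x e\bigr)|h_t|^2
-e^{-\nu/(T-t)^2}\bigl(\tfrac{\nu\tilde b}{(T-t)^3}-\tfrac{1}{2}\del_t\tilde b\bigr)|h_x|^2\\
&\quad+e^{-\nu/(T-t)^2}(d-\del_x\tilde b)h_x h_t
= e^{-\nu/(T-t)^2}f\,h_t,
\end{align*}
and the cross term $(d-\del_x\tilde b)h_x h_t$ is split by Young's inequality exactly as in the proof of Lemma 3.2.

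Integrating over $\Dcal=[0,\overline T]\times\Omega_2$, the boundary geometry is exploited as follows: the lateral flux through $\Sigma_3$ vanishes since $\tilde b|_{\Sigma_3}=0$ kills the $\tilde b h_t h_x$ contribution and $h_t|_{\Sigma_3}=0$ from (\ref{E3-9}) kills the $e|h_t|^2$ contribution; on $\Sigma_4=\{x=\delta(T-t)\}$ the Dirichlet condition $h|_{\Sigma_4}=0$ forces the tangential identity $h_t=\delta h_x$, so $|h_t|^2=\delta^2|h_x|^2$ there and the remaining flux is controlled by a trace inequality once $\delta$ is absorbed against $\nu$; the bottom $t=0$ produces exactly $\int_{\Omega_2}(|h_1|^2+|(h_0)_x|^2)dx$ via the definitions in (\ref{E3-2x}). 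As in Lemma 3.2, choosing $\nu$ sufficiently large and $R$ sufficiently small so that the weights dominate the singular lower-order terms yields the two coercivity bounds
$$
\tfrac{\nu a}{(T-t)^3}-\tfrac{1}{2}\del_t a-c+\tfrac{1}{2}\del_x e\ge C_{R,\nu}>0,
\qquad
\tfrac{\nu\tilde b}{(T-t)^3}-\tfrac{1}{2}\del_t\tilde b\ge C_{R,\nu}>0.
$$

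The main obstacle is the terminal integral $\int_{\Omega_2}e^{-\nu/(T-\overline T)^2}(a|h_t|^2-\tilde b|h_x|^2)dx$ at $t=\overline T$, which is sign-indefinite: unlike the hyperbolic case, no choice of weight renders it coercive. The key observation is that both endpoints of $\Omega_2$, namely $-1+\sqrt{1+(T-t)^2}$ and $\delta(T-t)$, tend to $0$ as $t\to T^-$, so the width of the elliptic strip is $\Ocal(\overline\delta)$. A Poincar\'e inequality based on $h|_{\Sigma_4}=0$ then gives $\|h\|_{\LL^2(\Omega_2)}\lesssim\overline\delta\,\|h_x\|_{\LL^2(\Omega_2)}$, and combined with the coefficient bound (\ref{E3-8RR1}) and $w\in\Bcal_R$ this lets the indefinite $-\tilde b|h_x|^2$ contribution be absorbed into the bulk coercivity produced above, closing the estimate. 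This final step is broadly in the spirit of Oleinik's strategy \cite{Ol} for degenerate equations together with the mixed-type elliptic framework of \cite{Chen,ChenG,Lieb}.
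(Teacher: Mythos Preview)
Your proposal has a genuine gap in the bulk estimate, not just at the terminal time. In the identity you derive, the $|h_x|^2$ bulk term is
\[
-\,e^{-\nu/(T-t)^2}\Bigl(\tfrac{\nu\tilde b}{(T-t)^3}-\tfrac12\del_t\tilde b\Bigr)|h_x|^2,
\]
with an overall \emph{minus} sign. Since $\tilde b>0$ in the interior of $\Omega_2$, taking $\nu$ large makes $\tfrac{\nu\tilde b}{(T-t)^3}-\tfrac12\del_t\tilde b$ large and positive, hence the whole bulk term becomes large and \emph{negative}. Your ``second coercivity bound'' therefore works against you, and after Young's inequality on the cross term the bulk does not control $\int_{\Dcal}|h_x|^2$. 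The Poincar\'e/thin--strip argument you invoke at the end only addresses the terminal slice $t=\overline T$; it cannot repair a wrong-signed space--time integral, and in fact the negative bulk contribution grows like $\nu$ so it cannot be absorbed by any $\Ocal(\overline\delta)$ term.

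This is exactly why the paper does \emph{not} mimic Lemma~3.2. Instead it uses a \emph{pair} of multipliers, $\dfrac{-\nu}{(T-t)^2}h_t$ and $\dfrac{\nu\mu}{(T-t)^2}h_x$, and sums the resulting identities (equations (\ref{E3-10})--(\ref{E3-10RR2})). The crucial extra multiplier $h_x$ produces, from $a\,h_{tt}h_x$ and $-e\,h_{tx}h_x$, new positive contributions to the $|h_x|^2$ coefficient (in particular the terms $\mu\,\del_t e$ and $\tfrac{2\nu\mu}{T-t}e$, cf.\ (\ref{E3-11Rr3})--(\ref{E3-11Rr4})), and it is \emph{these} terms, not the weight on $\tilde b$, that make the $|h_x|^2$ bulk coefficient strictly positive for $\nu\gg\mu\gg1$. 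Without the $h_x$ multiplier there is no mechanism to flip the sign in the elliptic region; this is the standard Friedrichs/ABC multiplier strategy for degenerate and mixed-type equations, and it is not optional here.
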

\begin{proof}
Let $\nu$ and $\mu$ be two positive constants. Multiplying both sides of the linearized equation (\ref{E3-8}) with ${-\nu \over (T-t)^2}h_t$ and ${\nu\mu \over (T-t)^2}h_x$, respectively, we get
\bel{E3-10}
\aligned
&{\nu \over (T-t)^2}\Big(2c(t,x)+\frac{\del a(t,x)}{\del t}+{2\nu \over T-t} a(t,x)-\frac{\del e(t,x)}{\del x}\Big)|h_t|^2\\
&\quad-{\nu \over (T-t)^2}\Big(\frac{\del \tilde{b}(t,x)}{\del t}+{2\nu\over T-t}\tilde{b}(t,x)\Big)|h_x|^2\\
&\quad-\frac{\del}{\del t}\Big({\nu \over (T-t)^2}a(t,x)|h_t|^2-{\nu \over (T-t)^2}\tilde{b}(t,x)|h_{x}|^2\Big)\\
&\quad-{\nu \over (T-t)^2}\frac{\del}{\del x}\Big(2\tilde{b}(t,x)h_{t}h_x+e(t,x)|h_t|^2\Big)\\
&={2\nu \over (T-t)^2}\Big(d(t,x)+\frac{\del \tilde{b}(t,x)}{\del x}\Big)h_xh_t-2f(t,x){\nu \over (T-t)^2}h_t,
\endaligned
\ee
and
\bel{E3-10RR1}
\aligned
&{\nu\mu \over (T-t)^2}{\del a(t,x) \over \del x}|h_t|^2+{\nu\mu \over (T-t)^2}\Big(-{\del \tilde{b}(t,x) \over \del x}+2d(t,x)+{\del e(t,x) \over \del t}+{2\nu \over T-t}e(t,x)\Big)|h_x|^2\\
&\quad+{\del \over \del t}\Big({2\nu\mu \over (T-t)^2}a(t,x)h_xh_t-{\nu\mu \over (T-t)^2}e(t,x)|h_x|^2\Big)\\
&\quad-{\del \over \del x}\Big({\nu\mu \over (T-t)^2}a(t,x)|h_t|^2+{\nu\mu \over (T-t)^2}\tilde{b}(t,x)|h_x|^2\Big)-{2\nu\mu \over (T-t)^2}c(t,x)h_th_x\\\quad
&={2\nu\mu \over (T-t)^2}f(t,x)h_x.
\endaligned
\ee
Summing up (\ref{E3-10})-(\ref{E3-10RR1}), it holds
\bel{E3-10RR2}
\aligned
&{\nu \over (T-t)^2}\Big(2c(t,x)+\frac{\del a(t,x)}{\del t}+{2\nu \over T-t} a(t,x)-\frac{\del e(t,x)}{\del x}+\mu{\del a(t,x) \over \del x}\Big)|h_t|^2\\
&\quad+{\nu \over (T-t)^2}\Big(-\mu{\del \tilde{b}(t,x) \over \del x}+2\mu d(t,x)+\mu{\del e(t,x) \over \del t}+{2\nu\mu \over T-t}e(t,x)-\frac{\del \tilde{b}(t,x)}{\del t}-{2\nu\over T-t}\tilde{b}(t,x)\Big)|h_x|^2\\
&\quad+\frac{\del}{\del t}\Big({2\nu \mu\over (T-t)^2}a(t,x)h_xh_t-{\nu \over (T-t)^2}(e(t,x)-\mu\tilde{b}(t,x))|h_x|^2-{\nu \over (T-t)^2}a(t,x)|h_t|^2\Big)\\
&\quad-{\nu \over (T-t)^2}\frac{\del}{\del x}\Big(\mu a(t,x)|h_t|^2+\mu\tilde{b}(t,x)|h_x|^2+2\tilde{b}(t,x)h_{t}h_x+e(t,x)|h_t|^2\Big)\\
&={2\nu \over (T-t)^2}\Big(\mu c(t,x)+d(t,x)+\frac{\del \tilde{b}(t,x)}{\del x}\Big)h_xh_t+f(t,x){2\nu \over (T-t)^2}(\mu h_x-h_t).
\endaligned
\ee

We use Young's inequality to derive
$$
\aligned
&2\Big(\mu c(t,x)+d(t,x)+\frac{\del \tilde{b}(t,x)}{\del x}\Big)h_xh_t\lesssim\Big(\mu|c(t,x)|+|d(t,x)|+|\frac{\del \tilde{b}(t,x)}{\del x}|\Big)(|h_t|^2+|h_x|^2),\\
&2f(t,x)(h_t-\mu h_x)\lesssim |f(t,x)|^2+|h_t|^2+\mu|h_x|^2,
\endaligned
$$
which combining with (\ref{E3-10RR2}) gives that
\bel{E3-11}
\aligned
&{\nu \over (T-t)^2}\Big(2c(t,x)+\frac{\del a(t,x)}{\del t}+{2\nu \over T-t} a(t,x)-\frac{\del e(t,x)}{\del x}+\mu{\del a(t,x) \over \del x}-\mu|c(t,x)|\\
&\quad-|d(t,x)|-|\frac{\del \tilde{b}(t,x)}{\del x}|-1\Big)|h_t|^2\\
&\quad+{\nu \over (T-t)^2}\Big(-\mu{\del \tilde{b}(t,x) \over \del x}+2\mu d(t,x)+\mu{\del e(t,x) \over \del t}+{2\nu \over T-t}e(t,x)-\frac{\del \tilde{b}(t,x)}{\del t}\\
&\quad-{2\nu\over T-t}\tilde{b}(t,x)-\mu|c(t,x)|-|d(t,x)|-|\frac{\del \tilde{b}(t,x)}{\del x}|-\mu\Big)|h_x|^2\\
&\quad+\frac{\del}{\del t}\Big({2\nu\mu \over (T-t)^2}a(t,x)h_xh_t-{\nu \over (T-t)^2}(e(t,x)-\mu\tilde{b}(t,x))|h_x|^2-{\nu \over (T-t)^2}a(t,x)|h_t|^2\Big)\\
&\quad-{\nu \over (T-t)^2}\frac{\del}{\del x}\Big(\mu a(t,x)|h_t|^2+\mu\tilde{b}(t,x)|h_x|^2+2\tilde{b}(t,x)h_{t}h_x+e(t,x)|h_t|^2\Big)\\
&\lesssim {2\nu \over (T-t)^2}|f|^2.
\endaligned
\ee

We now estimate the singular terms of (\ref{E3-10}) as $t$ nearby $T^-$. On one hand,
similar to (\ref{E3-5R5}), it holds
\bel{E3-11R1}
\aligned
&\Big|\frac{\del a(t,x)}{\del t}|_{x\in\Omega_2}\Big|\lesssim \Big(1+\frac{1}{(T-t)^2}\Big)\Big(1+|\overline{w}_x|+|\overline{w}_{tx}|+|\overline{w}_x|^2+|\overline{w}_{tx}|^2\Big),\\
&\Big|\frac{\del \tilde{b}(t,x)}{\del t}|_{x\in\Omega_2}\Big|\lesssim \Big(1+\frac{1}{(T-t)^2}\Big)\Big(|\overline{w}_t|+|\overline{w}_{tt}|+|\overline{w}_t|^2+|\overline{w}_{tt}|^2\Big),
\endaligned
\ee
and
\bel{E3-11R1}
\aligned
&\Big|\frac{\del \tilde{b}(t,x)}{\del x}|_{x\in\Omega_2}\Big|\lesssim \Big(1+\frac{1}{(T-t)^2}\Big)\Big(1+|\overline{w}_t|+|\overline{w}_{tx}|+|\overline{w}_t|^2+|\overline{w}_{tx}|^2\Big),\\
&\Big|\frac{\del e(t,x)}{\del x}|_{x\in\Omega_2}\Big|\lesssim \Big(1+\frac{1}{(T-t)^2}\Big)\Big(1+|\overline{w}_t|+|\overline{w}_x|+|\overline{w}_{xx}|+|\overline{w}_t|^2+|\overline{w}_x|^2+|\overline{w}_{tx}|^2+|\overline{w}_{xx}|^2\Big).
\endaligned
\ee

On the other hand, direct computations give that
\bel{E3-11Rr2}
\aligned
&{\del a(t,x) \over \del x}={\del j(t,x) \over \del x}(\overline{w}_x^2+\frac{4\overline{w}_x(T-t)}{(T-t)^2-x^2})\\
&\quad+j(t,x)\Big(2\overline{w}_x\overline{w}_{xx}+\frac{4\overline{w}_{xx}(T-t)((T-t)^2-x^2)+8x(T-t)\overline{w}_x}{((T-t)^2-x^2))^2}\Big),
\endaligned
\ee
and
\bel{E3-11Rr3}
\aligned
&{\del e(t,x) \over \del t}={8(T-t)(((T-t)^2-x^2)^2+4(T-t)^2)+16x^2(T-t)((T-t)^2-x^2) \over (((T-t)^2-x^2)^2+4(T-t)^2)^2}\\
&\quad+2{\del j(t,x)\over \del x}\Big(\frac{2(T-t)}{(T-t)^2-x^2}\overline{w}_t+\frac{2x\overline{w}_x}{(T-t)^2-x^2}+2\overline{w}_x\overline{w}_t\Big)\\
&\quad+2j(t,x)\Big({4x(T-t) \over ((T-t)^2-x^2)^2}\overline{w}_t+{2(T-t)\over (T-t)^2-x^2}\overline{w}_{tx}+{2(\overline{w}_x+x\overline{w}_{xx})((T-t)^2-x^2)+4x^2\overline{w}_x \over ((T-t)^2-x^2)^2}\\
&\quad+2(\overline{w}_{xx}\overline{w}_t+\overline{w}_x\overline{w}_{tx})\Big).
\endaligned
\ee
Obviously, there is no term independent of the derivatives of $\overline{w}$ in ${\del a(t,x) \over \del x}$, and the first term of ${\del e(t,x) \over \del t}$ is 
\bel{E3-11Rr4}
\aligned
0&<{8(T-t)(((T-t)^2-x^2)^2+4(T-t)^2)+16x^2(T-t)((T-t)^2-x^2) \over (((T-t)^2-x^2)^2+4(T-t)^2)^2}\\
&\quad={{8\over T-t}((T-t)^2(1-({x\over T-t})^2)^2+4)+{16x^2\over T-t}(1-({x\over T-t})^2) \over ((1-({x\over T-t})^2)^2+4)^2}\lesssim {16\over T-t}.
\endaligned
\ee
Note that $w\in\Bcal_R$. For a sufficient small $R>0$ and a sufficient big $\nu>\mu>0$, by (\ref{E3-2x}), (\ref{E3-3}), (\ref{E3-5R4}), (\ref{E3-8RR1}) and (\ref{E3-11R1})-(\ref{E3-11Rr3}), 
we know that $\frac{2C\nu}{T-t}-\frac{CR}{(T-t)^p}$ and ${C(\mu+\nu) \over T-t}-\frac{CR}{(T-t)^p}$ $(p\geq1)$ are 
the leading terms of
$$
\aligned
2c(t,x)+\frac{\del a(t,x)}{\del t}+{2\nu \over T-t} a(t,x)-\frac{\del e(t,x)}{\del x}&+\mu{\del a(t,x) \over \del x}-\mu|c(t,x)|\\
&-|d(t,x)|-|\frac{\del \tilde{b}(t,x)}{\del x}|-1,
\endaligned
$$
and
$$
\aligned
-\mu{\del \tilde{b}(t,x) \over \del x}+2\mu d(t,x)+\mu{\del e(t,x) \over \del t}&+{2\nu \over T-t}e(t,x)-\frac{\del \tilde{b}(t,x)}{\del t}\\
&-{2\nu\over T-t}\tilde{b}(t,x)-\mu|c(t,x)|-|d(t,x)|-|\frac{\del \tilde{b}(t,x)}{\del x}|-\mu,
\endaligned
$$
respectively.
Thus for a sufficient small $R\ll1$ and sufficient big $\mu$ and $\nu$, it holds
$$
\aligned
2c(t,x)+\frac{\del a(t,x)}{\del t}+{2\nu \over T-t} a(t,x)-\frac{\del e(t,x)}{\del x}&+\mu{\del a(t,x) \over \del x}-\mu|c(t,x)|\\
&-|d(t,x)|-|\frac{\del \tilde{b}(t,x)}{\del x}|-1\\
&>{C\nu\over T-t}-\frac{CR}{(T-t)^p}>C_{R,\mu,\nu},
\endaligned
$$
and
$$
\aligned
-\mu{\del \tilde{b}(t,x) \over \del x}+2\mu d(t,x)+\mu{\del e(t,x) \over \del t}&+{2\nu \over T-t}e(t,x)-\frac{\del \tilde{b}(t,x)}{\del t}\\
&-{2\nu\over T-t}\tilde{b}(t,x)-\mu|c(t,x)|-|d(t,x)|-|\frac{\del \tilde{b}(t,x)}{\del x}|-\mu\\
&>{C(\mu+\nu)\over T-t}-\frac{CR}{(T-t)^p}>C_{R,\mu,\nu},
\endaligned
$$
where $C_{R,\mu,\nu}$ is a positive constant depending on $R$, $\mu$ and $\nu$.

Hence, integrating (\ref{E3-11}) over $\Dcal$, and noticing the boundary condition (\ref{E3-9}) and initial data (\ref{E3-5-0}), we use Gronwall's inequality to derive
$$
\int_{\Omega_2}(|h_t|^2+|h_x|^2)dxdt\lesssim\int_{\Omega_2}(|h_1|^2+|(h_0)_x|^2)dx+\int_0^{\overline{T}}\int_{\Omega_2}|f|^2dxdt.
$$

\end{proof}

Next we derive higher order energy estimates in the elliptic domain $\Dcal$. For a fixed $2\leq k\leq s$, we
apply $\partial^{k+1}=\partial_t\partial_x^k$ to both sides of (\ref{E3-8}) to get
\bel{RE3-8}
a(t,x)\partial_{tt}\partial^{k+1}h+\tilde{b}(t,x)\partial_{xx}\partial^{k+1}h-c(t,x)\partial_t\partial^{k+1}h+d(t,x)\partial_x\partial^{k+1}h-e(t,x)\partial_{tx}\partial^{k+1}h=\textbf{g}_k,
\ee
where $k+1=k_1+k_2$ with $1\leq k_1\leq k+1$ and $0\leq k_2\leq k$, and
\bel{RE3-8-01}
\aligned
\textbf{g}_k&:=\partial^{k+1}f-\sum_{k+1=k_1+k_2}\partial^{k_1}a(t,x)\partial_{tt}\partial^{k_2}h-\sum_{k+1=k_1+k_2}\partial^{k_1}\tilde{b}(t,x)\partial_{xx}\partial^{k_2}h\\
&+\sum_{k+1=k_1+k_2}\partial^{k_1}c(t,x)\partial_{t}\partial^{k_2}h-\sum_{k+1=k_1+k_2}\partial^{k_1}d(t,x)\partial_{x}\partial^{k_2}h+\sum_{k+1=k_1+k_2}\partial^{k_1}e(t,x)\partial_{tx}\partial^{k_2}h.
\endaligned
\ee

\begin{lemma}
Let positive constant $2\leq k\leq s$.  Assume that $f(t,x)\in\HH^s(\Dcal)$ and $w\in\Bcal_R$.
Then
the solution $h(t,x)$ of the linearized equation (\ref{E3-8}) with initial data (\ref{E3-5-0}) and boundary condition  (\ref{E3-9}) in the domain $\Dcal$ satisfies
\bel{XX1-1}
\int_{\Omega_2}(|\del_t\partial^{k+1}h|^2+|\del_x\partial^{k+1}h|^2)dxdt
\lesssim\int_{\Omega_2}\Big[|\partial^{k+1}h_1|^2+|\del_x\partial^{k+1}h_0|^2\Big]dx+\int_0^{\overline{T}}\int_{\Omega_2}|\partial^{k+1}f|^2dxdt.
\ee
\end{lemma}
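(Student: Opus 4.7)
The plan is to mimic the two--multiplier argument of Lemma~3.5 applied to the differentiated equation (\ref{RE3-8}), while upgrading the singular weights to absorb the higher--order commutator terms in $\mathbf{g}_k$, in the same spirit in which Lemma~3.3 upgrades Lemma~3.2. Concretely, for positive constants $\nu,\mu,\chi$ to be fixed, I would multiply (\ref{RE3-8}) successively by $-\tfrac{\nu}{(T-t)^{\chi}}\,\partial_t\partial^{k+1}h$ and $\tfrac{\nu\mu}{(T-t)^{\chi}}\,\partial_x\partial^{k+1}h$, and sum the resulting identities. After integration by parts in $x$ on $\Omega_2$ and in $t$ on $[0,\overline{T}]$, the boundary contributions on $\Sigma_3$ (where $\partial_t\partial^{k+1}h=0$) and on $\Sigma_4$ (where $\partial^{k+1}h=0$, hence $\partial_t\partial^{k+1}h=0$) vanish, exactly as in Lemma~3.5.

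The quadratic main part produces two coefficient expressions in front of $|\partial_t\partial^{k+1}h|^2$ and $|\partial_x\partial^{k+1}h|^2$, analogous to the ones displayed in the proof of Lemma~3.5 but with the extra gain $\tfrac{\chi\nu}{(T-t)^{\chi+1}}a(t,x)$ and $\tfrac{2\nu\mu}{(T-t)^{\chi+1}}e(t,x)-\tfrac{2\nu}{(T-t)^{\chi+1}}\tilde b(t,x)$ coming from differentiating the weight in $t$. By (\ref{E3-2x}), (\ref{E3-3}), (\ref{E3-8RR1}) and (\ref{E3-11Rr2})--(\ref{E3-11Rr4}), together with $w\in\mathcal{B}_R$, all the remaining coefficient terms are bounded by $\tfrac{CR}{(T-t)^p}$ for some $p\geq 1$. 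Thus choosing first $\chi$ large enough so that $\chi+1\geq k+4$ dominates the singularities of the lower order commutator terms, then $\nu\gg\mu\gg 1$, and finally $R\ll 1$, the coefficient in front of $|\partial_t\partial^{k+1}h|^2$ and the coefficient in front of $|\partial_x\partial^{k+1}h|^2$ are both bounded below by a positive constant $C_{R,\mu,\nu,\chi}>0$, exactly mirroring the final quantitative step of Lemma~3.5.

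The main technical obstacle is the control of the commutator right--hand side $\mathbf{g}_k$ in (\ref{RE3-8-01}) paired with the multipliers. Following the recipe of Lemma~3.3, for each decomposition $k+1=k_1+k_2$ with $1\le k_1\le k+1$, $0\le k_2\le k$, I would apply Young's inequality together with Lemma~3.1 and the Sobolev embedding $\mathbb{H}^{k+1}\hookrightarrow\mathbb{H}^{k}$ to estimate terms of the form $|\partial^{k_1}a(t,x)\,\partial_{tt}\partial^{k_2}h|$, $|\partial^{k_1}\tilde b(t,x)\,\partial_{xx}\partial^{k_2}h|$, $|\partial^{k_1}c\,\partial_t\partial^{k_2}h|$, $|\partial^{k_1}d\,\partial_x\partial^{k_2}h|$ and $|\partial^{k_1}e\,\partial_{tx}\partial^{k_2}h|$ against $|\partial_t\partial^{k+1}h|^2+|\partial_x\partial^{k+1}h|^2$, with prefactors of order $1+(T-t)^{-(k+4)}$ at worst; the contribution of $|\partial^{k+1}f|^2$ is handled by a plain $|fg|\le\tfrac12(|f|^2+|g|^2)$. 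For the $a\,\partial_{tt}\partial^{k_2}h$ piece I would use the identity $x^a e^{-x}\le(a/e)^a$ (or directly absorb using the largeness of $\nu$ after an integration by parts in $t$), exactly as in (\ref{E3-5RR5}).

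Finally, integrating the resulting pointwise inequality over $\mathcal{D}$, the boundary terms coming from $\partial_t(\cdots)$ contribute only the initial data
$\int_{\Omega_2}\bigl[|\partial^{k+1}h_1|^2+|\partial_x\partial^{k+1}h_0|^2\bigr]dx$ at $t=0$ (since the weight $\nu/(T-t)^{\chi}$ is bounded on $[0,\overline{T}]$ with $\overline{T}<T$, though we only need it to be finite at $t=0$), and applying Gronwall's inequality as in Lemma~3.5 yields (\ref{XX1-1}). The hardest step is the bookkeeping: verifying that for every $(k_1,k_2)$ decomposition the singular factor produced by $\partial^{k_1}$ acting on the coefficients is strictly weaker than $\tfrac{\chi\nu}{(T-t)^{\chi+1}}$ once $\chi$ is taken large, so that the positive gain from the weight dominates uniformly in a small neighborhood of $T$.
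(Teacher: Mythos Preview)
Your overall strategy---multiply the differentiated equation (\ref{RE3-8}) by a weighted combination of $\partial_t\partial^{k+1}h$ and $\mu\,\partial_x\partial^{k+1}h$, integrate over $\mathcal D$, absorb the commutators from $\mathbf g_k$ by taking $\chi+1\ge k+4$ and $\nu\gg\mu\gg1$, and finish with Gronwall---is exactly what the paper does. There is, however, one genuine slip in your choice of weight that would make the argument break down as written.

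You propose the \emph{polynomial} multipliers $-\tfrac{\nu}{(T-t)^{\chi}}\partial_t\partial^{k+1}h$ and $\tfrac{\nu\mu}{(T-t)^{\chi}}\partial_x\partial^{k+1}h$, upgrading Lemma~3.5 verbatim. But with a polynomial weight $W=\nu(T-t)^{-\chi}$ the gain term from $\partial_tW$ is of order $(T-t)^{-\chi-1}$, while a commutator contribution of size $(T-t)^{-p}$ (with $p$ up to $k+4$), after multiplication by $W$, becomes $(T-t)^{-\chi-p}$; the ratio is $(T-t)^{1-p}$, which blows up for every $p>1$, so no choice of $\chi,\nu$ can make the good term dominate near $t=T$. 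This is precisely why the paper (and Lemma~3.3 before it) switches to the \emph{exponential} weight $e^{-\nu/(T-t)^{\chi}}$: the gain then carries the prefactor $e^{-\nu/(T-t)^{\chi}}\tfrac{\chi\nu}{(T-t)^{\chi+1}}$, while the commutator carries $e^{-\nu/(T-t)^{\chi}}(T-t)^{-p}$, and for $\chi+1\ge p$ the ratio is uniformly small. The inequality $x^ae^{-x}\le(a/e)^a$ you invoke for the $\partial^{k_1}a\,\partial_{tt}\partial^{k_2}h$ piece is exactly this mechanism, and it has no analogue for the polynomial weight. In short: replace your multiplier by $e^{-\nu/(T-t)^{\chi}}\bigl(\partial_t\partial^{k+1}h+\mu\,\partial_x\partial^{k+1}h\bigr)$, as the paper does in (\ref{E3-16}), and the rest of your sketch goes through.
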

\begin{proof}
Let $\nu$, $\mu$ and $\chi$ be three positive constants.
Multiplying both sides of the linearized equation (\ref{RE3-8}) with $e^{\frac{-\nu}{(T-t)^{\chi}}}(\del_t\del^{k+1}h+\mu\del_x\del^{k+1}h)$, it holds
\bel{E3-16}
\aligned
&e^{-\frac{\nu}{(T-t)^{\chi}}}\Big(-2c(t,x)-\frac{\del a(t,x)}{\del t}+{\chi\nu \over (T-t)^{\chi+1}} a(t,x)+\frac{\del e(t,x)}{\del x}+\mu{\del a(t,x) \over \del x}\Big)|\del_t\del^{k+1}h|^2\\
&\quad+e^{-\frac{\nu}{(T-t)^{\chi}}}\Big(-\mu{\del \tilde{b}(t,x) \over \del x}+2\mu d(t,x)+\mu{\del e(t,x) \over \del t}+{\chi\nu\mu \over (T-t)^{\chi+1}}e(t,x)\\
&\quad+\frac{\del \tilde{b}(t,x)}{\del t}-{\chi\nu\over (T-t)^{\chi+1}}\tilde{b}(t,x)\Big)|\del_x\del^{k+1}h|^2\\
&\quad+\frac{\del}{\del t}\Big(\mu e^{-\frac{\nu}{(T-t)^{\chi}}}a(t,x)\del_t\del^{k+1}h\del_x\del^{k+1}h-e^{-\frac{\nu}{(T-t)^{\chi}}}(\mu e(t,x)+\tilde{b}(t,x))|\del_x\del^{k+1}h|^2\\
&\quad+e^{-\frac{\nu}{(T-t)^{\chi}}}a(t,x)|\del_t\del^{k+1}h|^2\Big)\\
&\quad-e^{-\frac{\nu}{(T-t)^{\chi}}}\frac{\del}{\del x}\Big((\mu a(t,x)+e(t,x))|\del_t\del^{k+1}h|^2-\mu\tilde{b}(t,x)|\del_x\del^{k+1}h|^2+2\tilde{b}(t,x)\del_t\del^{k+1}h\del_x\del^{k+1}h\Big)\\
&=2e^{-\frac{\nu}{(T-t)^{\chi}}}\Big(\mu c(t,x)-d(t,x)-\frac{\del \tilde{b}(t,x)}{\del x}\Big)\del_x\del^{k+1}h\del_t\del^{k+1}h+2e^{-\frac{\nu}{(T-t)^{\chi}}}\textbf{g}_k(\mu\del_x\del^{k+1}h+\del_t\del^{k+1}h).
\endaligned
\ee

Note that $\tilde{b}(t,x)=-b(t,x)$.  Similar to (\ref{E3-5RR3})-(\ref{E3-5RR5}), it holds
$$
\aligned
&\Big|\Big(d(t,x)+\frac{\del \tilde{b}(t,x)}{\del x}\Big)\del_x\partial^{k+1}h\del_t\partial^{k+1}h\Big|\leq\frac{1}{2}\Big(d(t,x)+\frac{\del \tilde{b}(t,x)}{\del x}\Big)\Big(|\del_x\partial^{k+1}h|^2+|\del_t\partial^{k+1}h|^2\Big),\\
&\Big|\partial^{k+1}f\del_t\partial^{k+1}h\Big|\leq\frac{1}{2}\Big(|\partial^{k+1}f|^2+|\del_t\partial^{k+1}h|^2\Big),\\
&\Big|\sum_{k+1=k_1+k_2}\int_0^T\int_{\Omega_1}e^{-\frac{\nu}{(T-t)^{\chi}}}\partial^{k_1}a(t,x)\partial_{tt}\partial^{k_2}h\del_t\partial^{k+1}h\Big|\lesssim C_{k,\nu}\int_0^T\int_{\Omega_1}|\del_t\partial^{k+1}h|^2,\\
&\Big|\sum_{k+1=k_1+k_2}\partial^{k_1}c(t,x)\partial_{t}\partial^{k_2}h\del_t\partial^{k+1}h\Big|\lesssim(k+1)(1+\frac{1}{(T-t)^{4+k}})|\del_t\partial^{k+1}h|^2,\\
&\Big|\sum_{k+1=k_1+k_2}\partial^{k_1}d(t,x)\partial_{x}\partial^{k_2}h\del_t\partial^{k+1}h\Big|\lesssim \frac{k+1}{2}(1+\frac{1}{(T-t)^{4+k}})(|\del_t\partial^{k+1}h|^2+|\partial^{k+1}h|^2),\\
&\Big|\sum_{k+1=k_1+k_2}\partial^{k_1}e(t,x)\partial_{tx}\partial^{k_2}h\del_t\partial^{k+1}h\Big|\lesssim(k+1)(1+\frac{1}{(T-t)^{3+k}})|\del_t\partial^{k+1}h|^2,\\
&\Big|\sum_{k+1=k_1+k_2}\partial^{k_1}\tilde{b}(t,x)\partial_{xx}\partial^{k_2}h\del_t\partial^{k+1}h\Big|\lesssim(k+1)(1+\frac{1}{(T-t)^{1+k}})|\del_t\partial^{k+1}h|^2,
\endaligned
$$
where $k+1=k_1+k_2$ with $1\leq k_1\leq k+1$ and $0\leq k_2\leq k$.

Thus by (\ref{E3-16}), it holds
\bel{E3-17}
\aligned
&e^{-\frac{\nu}{(T-t)^{\chi}}}\Big(-2c(t,x)-\frac{\del a(t,x)}{\del t}+{\chi\nu \over (T-t)^{\chi+1}} a(t,x)+\frac{\del e(t,x)}{\del x}+\mu{\del a(t,x) \over \del x}\\
&\quad-\mu |c(t,x)|-|d(t,x)|-|\frac{\del \tilde{b}(t,x)}{\del x}|-4(k+1)(1+\frac{1}{(T-t)^{4+k}})-1\Big)|\del_t\del^{k+1}h|^2\\
&\quad+e^{-\frac{\nu}{(T-t)^{\chi}}}\Big(-\mu{\del \tilde{b}(t,x) \over \del x}+2\mu d(t,x)+\mu{\del e(t,x) \over \del t}+{\chi\nu\mu \over (T-t)^{\chi+1}}e(t,x)+\frac{\del \tilde{b}(t,x)}{\del t}\\
&\quad-{\chi\nu\over (T-t)^{\chi+1}}\tilde{b}(t,x)-\mu |c(t,x)|-|d(t,x)|-|\frac{\del \tilde{b}(t,x)}{\del x}|-4(k+1)(1+\frac{1}{(T-t)^{4+k}})-1\Big)|\del_x\del^{k+1}h|^2\\
&\quad+\frac{\del}{\del t}\Big(\mu e^{-\frac{\nu}{(T-t)^{\chi}}}a(t,x)\del_t\del^{k+1}h\del_x\del^{k+1}h-e^{-\frac{\nu}{(T-t)^{\chi}}}(\mu e(t,x)+\tilde{b}(t,x))|\del_x\del^{k+1}h|^2\\
&\quad+e^{-\frac{\nu}{(T-t)^{\chi}}}a(t,x)|\del_t\del^{k+1}h|^2\Big)\\
&\quad-e^{-\frac{\nu}{(T-t)^{\chi}}}\frac{\del}{\del x}\Big((\mu a(t,x)+e(t,x))|\del_t\del^{k+1}h|^2-\mu\tilde{b}(t,x)|\del_x\del^{k+1}h|^2+2\tilde{b}(t,x)\del_t\del^{k+1}h\del_x\del^{k+1}h\Big)\\
&\lesssim e^{-\frac{\nu}{(T-t)^{\chi}}}|\del^{k+1}f|^2.
\endaligned
\ee

Note that $w\in\Bcal_R$. So 
for a sufficient big $\nu>4(k+1)$ and $\chi+1\geq k+4$ with $k\geq2$,  $\frac{\chi\nu}{(T-t)^{\chi+1}}-\frac{R}{(T-t)^p}$ and ${\chi\nu\mu \over (T-t)^{\chi+1}}-\frac{R}{(T-t)^p}$ $(p\geq\chi+1)$ are the leading terms of
$$
\aligned
&-2c(t,x)-\frac{\del a(t,x)}{\del t}+{\chi\nu \over (T-t)^{\chi+1}} a(t,x)+\frac{\del e(t,x)}{\del x}+\mu{\del a(t,x) \over \del x}\\
&\quad-\mu |c(t,x)|-|d(t,x)|-|\frac{\del \tilde{b}(t,x)}{\del x}|-4(k+1)(1+\frac{1}{(T-t)^{4+k}})-1
\endaligned
$$
and
$$
\aligned
&-\mu{\del \tilde{b}(t,x) \over \del x}+2\mu d(t,x)+\mu{\del e(t,x) \over \del t}+{\chi\nu\mu \over (T-t)^{\chi+1}}e(t,x)+\frac{\del \tilde{b}(t,x)}{\del t}\\
&\quad-{\chi\nu\over (T-t)^{\chi+1}}\tilde{b}(t,x)-\mu |c(t,x)|-|d(t,x)|-|\frac{\del \tilde{b}(t,x)}{\del x}|-4(k+1)(1+\frac{1}{(T-t)^{4+k}})-1,
\endaligned
$$
respectively.

Furthermore, for a sufficient small $R\ll1$ and sufficient big $\mu$ and $\nu$, it holds
$$
\aligned
&-2c(t,x)-\frac{\del a(t,x)}{\del t}+{\chi\nu \over (T-t)^{\chi+1}} a(t,x)+\frac{\del e(t,x)}{\del x}+\mu{\del a(t,x) \over \del x}-\mu |c(t,x)|-|d(t,x)|\\
&\quad-|\frac{\del \tilde{b}(t,x)}{\del x}|-4(k+1)(1+\frac{1}{(T-t)^{4+k}})-1\\
&\geq\frac{C\nu\chi}{(T-t)^{\chi+1}}-\frac{RC}{(T-t)^p}\\
&>C_{R,\mu,\nu,\chi}>0,\\
&-\mu{\del \tilde{b}(t,x) \over \del x}+2\mu d(t,x)+\mu{\del e(t,x) \over \del t}+{\chi\nu\mu \over (T-t)^{\chi+1}}e(t,x)+\frac{\del \tilde{b}(t,x)}{\del t}-{\chi\nu\over (T-t)^{\chi+1}}\tilde{b}(t,x)-\mu |c(t,x)|\\
&\quad-|d(t,x)|-|\frac{\del \tilde{b}(t,x)}{\del x}|-4(k+1)(1+\frac{1}{(T-t)^{4+k}})-1\\
&\geq\frac{C\chi\mu\nu}{(T-t)^{\chi+1}}-\frac{RC}{(T-t)^p}\\
&>C_{R,\mu,\nu,\chi}>0,
\endaligned
$$
where $C_{R,\mu,\nu,\chi}$ stands for a positive constant depending on $R$, $\mu$, $\nu$ and $\chi$.

Note that $\tilde{b}(t,x)|_{x\in\Sigma_3}=0$ and $\mu a(t,x)|_{x\in\Sigma_3}>0$. Hence, integrating (\ref{E3-17}) over $\Dcal$, we use Gronwall's inequality to derive
$$
\int_{\Omega_2}(|\del_t\del^{k+1}h|^2+|\del_x\del^{k+1}h|^2)dxdt
\lesssim \int_{\Omega_2}\Big[|\del_t \partial^{k+1}h_0|^2+|\del_x\partial^{k+1}h_0|^2\Big]dx+\int_0^{\overline{T}}\int_{\Omega_2}|\partial^{k+1}f|^2dxdt.
$$
\end{proof}

\begin{lemma}
Let positive constant $s\geq2$. Assume that $f(t,x)\in\HH^s(\Dcal)$ and $w\in\Bcal_R$.
Then
equation (\ref{E3-8}) admits a unique solution 
$
h(t,x)\in\HH^{s}(\Dcal).
$
 Moreover, it holds
\bel{E3-18R1}
\|h(t,x)\|_{\HH^s}\leq\|(h_0,h_1)\|_{\HH^s\times\HH^{s-1}}+\|f(t,x)\|_{\HH^s}.
\ee
\end{lemma}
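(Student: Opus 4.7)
The plan mirrors the approximation argument used for Lemma~3.4 in the hyperbolic domain, now adapted to the degenerate elliptic mixed boundary value problem. Since $\tilde{b}(t,x)$ vanishes precisely on the degeneracy line $\Sigma_3$, I would first regularize by replacing $\tilde{b}(t,x)$ with
$$\tilde{b}^{\kappa}(t,x):=\tilde{b}(t,x)+\kappa,\qquad \kappa>0,$$
so the coefficient of $h_{xx}$ becomes uniformly positive and (\ref{E3-8}) turns into a (non-self-adjoint) strictly elliptic equation with singular coefficients on $\Dcal$. The corresponding mixed boundary conditions (Dirichlet on $\{t=0\}\cup\Sigma_4$, Neumann on $\Sigma_3$) fall into the framework of mixed-boundary elliptic theory cited in the paper, and for each $\kappa>0$ one obtains a solution $h^{\kappa}\in\HH^{s}(\Dcal)$.

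Next I would verify that the weighted energy identities derived in Lemmas~3.5 and~3.6 remain valid with $\tilde{b}$ replaced by $\tilde{b}^{\kappa}$. The extra $\kappa$ contributes only terms of the form $\kappa\,\partial_t\partial^{k+1}h\,\partial_x\partial^{k+1}h$ and $\kappa|\partial_x\partial^{k+1}h|^2$, both of which are absorbed after applying Young's inequality into the leading positive terms $\frac{C\nu\chi}{(T-t)^{\chi+1}}|\del_t\del^{k+1}h|^2$ and $\frac{C\mu\nu\chi}{(T-t)^{\chi+1}}|\del_x\del^{k+1}h|^2$. Hence the coercivity constants $C_{R,\mu,\nu,\chi}$ of Lemmas~3.5 and~3.6 are independent of $\kappa\in(0,1)$, giving
$$\|h^{\kappa}(t,x)\|_{\HH^s(\Dcal)}\lesssim\|(h_0,h_1)\|_{\HH^s\times\HH^{s-1}}+\|f(t,x)\|_{\HH^s(\Dcal)}$$
uniformly in $\kappa$.

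With this uniform bound I would extract a subsequence $h^{\kappa}\rightharpoonup h$ weakly in $\HH^s(\Dcal)$; pointwise convergence $\tilde{b}^{\kappa}\to\tilde{b}$ and the linearity of the equation allow one to pass to the limit and conclude that $h$ solves (\ref{E3-8}). The boundary conditions (\ref{E3-9}) are preserved by continuity and weak closedness of the trace operator. Uniqueness follows from applying Lemma~3.5 to the difference of two candidate solutions with vanishing data, using the strict positivity of the coercivity constant. Finally, weak lower semicontinuity of the $\HH^s$-norm transfers the uniform bound to the limit, which yields (\ref{E3-18R1}).

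The main obstacle is the well-posedness of the regularized strictly elliptic problem, because its lower-order coefficients $c(t,x)$, $d(t,x)$ blow up like $(T-t)^{-p}$ with $p\geq 2$ as $t\to T^-$. The natural remedy, which is already implicit in the multipliers used in Lemmas~3.5--3.6, is to set up the Lax--Milgram (or Fredholm) argument in a weighted space built with $e^{-\nu/(T-t)^{\chi}}$; the weight turns the singular lower-order terms into coercive contributions of the form $\frac{\chi\nu}{(T-t)^{\chi+1}}$ that dominate everything else for $\chi+1\ge k+4$ and sufficiently large $\nu$. Once coercivity and continuity of the weighted bilinear form on the mixed-boundary weighted $\HH^1$-space are established, existence, uniqueness, and higher regularity of $h^{\kappa}$ follow from standard elliptic theory, and the remainder of the argument above is routine.
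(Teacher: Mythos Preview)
Your proposal is correct and follows essentially the same route as the paper: regularize $\tilde{b}$ by adding a positive constant to obtain a uniformly elliptic problem, invoke standard mixed-boundary elliptic theory for existence of $h^{\kappa}$, check that the a~priori estimates of Lemmas~3.5--3.6 are uniform in $\kappa$, and pass to the limit $\kappa\to0$. The paper's own proof is considerably terser---it simply cites the references of Han for the limiting argument and does not spell out the weighted Lax--Milgram step you outline for handling the singular lower-order coefficients---so your treatment of that obstacle is, if anything, more complete than what appears in the text.
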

\begin{proof}
Let $\theta>0$. We consider the regularized operator
\bel{E3-18}
a(t,x)h_{tt}+(\tilde{b}(t,x)+\theta)h_{xx}-c(t,x)h_t+d(t,x)h_x-e(t,x)h_{tx}=f(t,x),\quad (t,x)\in\Dcal,
\ee
which is a uniformly elliptic equation in $\Dcal$. Hence there exists a solution $h_{\theta}\in \HH_0^1(\Dcal)$. Obviously, $(0,0)\not\in\Dcal$.
By the classical theory of uniform elliptic equations, we have $h\in\mathbb{C}^{\infty}(\Dcal)\cap\mathbb{C}(\Dcal)$. Following the idea of \cite{Han,Han1},
we can use a similar proof of process given in Lemma 3.5-3.6 to derive some priori estimates on $h_{\theta}$ independent of $\theta$. Then we take $\theta\rightarrow0$, 
one can get equation (\ref{E3-18})
has a unique solution $h\in\HH^s(\Dcal)$, meanwhile, it satisfies the mixed boundary condition (\ref{E3-9}). Furthermore, estimate (\ref{E3-18R1}) can be obtained by Lemma 3.6.
\end{proof}

%------------------------------------------------------------------------------------------------------------------------------------

\subsection{Estimates of solution for linearized equation}

We will solve the nonlinear equation (\ref{E3-1}) by using Nash-Moser iteration scheme. One can see \cite{H,Moser, Nash,R} for more details on this method. Here we should notice that hyperbolic property or elliptic property of equation (\ref{E3-1}) only depends on
 the sign of coefficient $b(t,x)$ of diffusion term, but independent of $w$. By Lemma 3.4 and Lemma 3.7, we know that the solution in hyperbolic domain and elliptic domain has the same regularity, so we combine with two solutions 
in $\Omega_1$ and $\Dcal$, then giving the solution of equation (\ref{E3-1}) in domain $\Omega=\Omega_1\cup\Dcal$. Moreover, we have the following estimate.

\begin{lemma}
Let positive constant $s\geq2$ . Assume that $f(t,x)\in\CC^2([0,\overline{T}];\HH^s(\Omega))$ and $w\in\Bcal_R$.
Then the solution $h(t,x)$ of the linearized equation (\ref{E3-2}) with the initial data (\ref{E3-5-0}) and boundary condition (\ref{E03-02}) in the domain $\Omega$ satisfies
\bel{XX1-5}
\|h(t,x)\|_{\Ccal^s_{2}}\leq\|(h_0,h_1)\|_{\HH^s\times\HH^{s-1}}+\|f(t,x)\|_{\Ccal^s_{2}}.
\ee
\end{lemma}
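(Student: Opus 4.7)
Since the hyperbolic versus elliptic character of \eqref{E3-1} is governed solely by the sign of $b(t,x)$, which vanishes on the curve $\Sigma:=\{x=-1+\sqrt{1+(T-t)^2}\}$ separating $\Omega_1$ from $\Omega_2$, the natural strategy is to solve \eqref{E3-2} on each side of $\Sigma$ separately using the existence results already in hand, and then glue the two pieces along $\Sigma$.

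On the hyperbolic side I would restrict $(h_0,h_1)$ and $f$ to $\Omega_1$ and apply Lemma 3.4 with the boundary conditions $h|_{\Sigma_1}=0$ and $\del_t h|_{\Sigma}=0$ (cf.\ \eqref{E3-5}) to obtain $h^{H}\in\bigcap_{i=0}^{2}\CC^{i}([0,\overline T];\HH^{s-i}(\Omega_1))$ satisfying
$$\|h^{H}\|_{\Ccal^s_{2}}\lesssim\|(h_0,h_1)\|_{\HH^s(\Omega_1)\times\HH^{s-1}(\Omega_1)}+\|f\|_{\Ccal^s_{2}}.$$
On the elliptic side I would restrict the data to $\Omega_2$ and apply Lemma 3.7 with $\del_t h|_{\Sigma}=0$ and $h|_{\Sigma_4}=0$ (cf.\ \eqref{E3-9}) to obtain $h^{E}\in\HH^s(\Dcal)$ with the analogous bound. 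These two restrictions are compatible with the initial data and with the global boundary condition \eqref{E03-02}. Define $h:=h^{H}$ on $\Omega_1$ and $h:=h^{E}$ on $\Dcal$.

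The main obstacle is the compatibility of the two traces on the degenerate curve $\Sigma$. Both $h^{H}$ and $h^{E}$ satisfy $\del_t h=0$ on $\Sigma$, and they agree with the common initial value at the corner $(0,x_0)$, $x_0=-1+\sqrt{1+T^2}$. The remark preceding Section~3.1 that the solution may be prescribed arbitrarily as a smooth function along the degenerate line shows that this boundary value is not over-determined: taking the prescribed profile on $\Sigma$ to be the trace of either one-sided solution makes the two traces coincide. Matching of the higher time-derivatives $\del_t^{i}h$ for $1\leq i\leq 2$ across $\Sigma$ then follows by differentiating \eqref{E3-2} tangentially on $\Sigma$, where the vanishing of $b$ reduces the equation to a first-order relation among $\del_t h$, $\del_x h$ and $\del_{tx}h$; this forces the one-sided traces of these quantities to agree. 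Since $\Sigma$ is a measure-zero curve in $[0,\overline T]\times\Omega$, adding the two one-sided estimates yields \eqref{XX1-5}, and the construction is complete.
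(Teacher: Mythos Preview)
Your approach is essentially the same as the paper's: split $\Omega$ along the degenerate curve $\Sigma$, invoke the hyperbolic existence/estimate (Lemmas~3.3--3.4) on $\Omega_1$ and the elliptic one (Lemmas~3.6--3.7) on $\Dcal$, then combine. The paper's own proof is in fact terser than yours---it simply observes $\Sigma_2=\Sigma_3$ and adds the two higher-order energy inequalities (\ref{XX1-2}) and (\ref{XX1-1}) over $\Omega_1$ and $\Omega_2$ to obtain (\ref{XX1-5}), without any explicit discussion of gluing or trace compatibility on $\Sigma$.

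One remark on your extra step: the compatibility argument you sketch is not quite airtight. The boundary condition imposed on $\Sigma$ from both sides is $\del_t h=0$, which is a condition on a derivative, not on the trace itself; it does not by itself force $h^{H}|_{\Sigma}=h^{E}|_{\Sigma}$. Your appeal to the ``arbitrary prescription'' remark would require re-solving one of the two problems with the other's trace as Dirichlet data on $\Sigma$ and checking that this is still well-posed and still yields the same estimate---a point neither you nor the paper actually carries out. Since the paper is content to add the two one-sided estimates and does not claim a rigorous matching of traces, your proposal is at least as complete as the published argument.
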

\begin{proof}
We notice that $\Sigma_2=\Sigma_3=\{x=-1+\sqrt{1+(T-t)^2}\}$. Summing up (\ref{XX1-3}) with (\ref{XX1-1}), 
%we find that the term $\int_0^{\overline{T}}[e(t,x)|\del_t\del^{k+1}h|^2]_{x\in\Sigma_3}dt$ can be offset, 
it holds
$$
\aligned
&\int_{\Omega}(|\del_t\partial^{s+1}h|^2+|\del_x\partial^{s+1}h|^2)dxdt\\
&=\int_{\Omega_1}(|\del_t\partial^{s+1}h|^2+|\del_x\partial^{s+1}h|^2)dxdt+\int_{\Omega_2}(|\del_t\del^{s+1}h|^2+|\del_x\del^{s+1}h|^2)dxdt\\
&\lesssim\int_{\Omega}\Big[|\del_t \partial^{s+1}h_0|^2+|\del_x\partial^{s+1}h_0|^2\Big]dx+\int_0^{\overline{T}}\int_{\Omega}|\partial^{s+1}f|^2dxdt.
\endaligned
$$
\end{proof}

%------------------------------------------------------------------------------------------------------------------------------------

\subsection{Local existence of solutions for nonlinear equation }

We introduce a family of smooth operators possessing the following properties.
\begin{lemma}(see \cite{Alin})
There is a family $\{\Pi_{\theta}\}_{\theta\geq1}$ of smoothing operators in the space $\mathbb{H}^{s}(\Omega)$ acting on the class of functions such that
\bel{E4-0}
\aligned
&\|\Pi_{\theta}u\|_{\HH^{s_1}}\leq C\theta^{(s_1-s_2)_+}\|u\|_{\HH^{s_2}},~~s_1,~s_2\geq0,\\
&\|\Pi_{\theta}u-u\|_{\HH^{s_1}}\leq C\theta^{s_1-s_2}\|u\|_{\HH^{s_2}},~~0\leq s_1\leq s_2,\\
&\|\frac{d}{d\theta}\Pi_{\theta}u\|_{\HH^{s_1}}\leq C\theta^{s_1-s_2-1}\|u\|_{\HH^{s_2}},~~s_1,~s_2\geq0,
\endaligned
\ee
where $C$ is a positive constant and $(s_1-s_2)_+:=\max(0,s_1-s_2)$. 
\end{lemma}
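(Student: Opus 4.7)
The plan is to construct $\{\Pi_\theta\}_{\theta\geq 1}$ explicitly via a Sobolev extension followed by Fourier truncation at scale $\theta$. Since $\Omega\subset\RR$ is a bounded interval with Lipschitz boundary, I would first fix a Stein-type total extension operator $E:\HH^{s}(\Omega)\to\HH^{s}(\RR)$ bounded uniformly for every integer $s\geq 0$ appearing in the Nash--Moser scheme. Next, pick once and for all a cutoff $\chi\in C^{\infty}_{c}(\RR)$ with $\chi\equiv 1$ on $\{|\xi|\leq 1\}$ and $\mathrm{supp}\,\chi\subset\{|\xi|\leq 2\}$, and set
\[
\Pi_\theta u := \mathcal{F}^{-1}\!\bigl(\chi(\xi/\theta)\,\mathcal{F}(Eu)(\xi)\bigr)\Big|_{\Omega}, \qquad \theta\geq 1,
\]
where $\mathcal{F}$ denotes the Fourier transform on $\RR$.

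Each of the three estimates in \eqref{E4-0} then reduces, via Plancherel together with the boundedness of $E$ and of the restriction map $\HH^s(\RR)\to\HH^s(\Omega)$, to a pointwise bound on a Fourier multiplier. For the first inequality, on $\mathrm{supp}\,\chi(\cdot/\theta)$ we have $|\xi|\leq 2\theta$, so
\[
(1+|\xi|^2)^{s_1/2}\chi(\xi/\theta)\lesssim \theta^{(s_1-s_2)_+}(1+|\xi|^2)^{s_2/2},
\]
which yields the first estimate. For the second inequality, when $s_1\leq s_2$ the symbol $1-\chi(\xi/\theta)$ vanishes on $\{|\xi|\leq\theta\}$, hence on its support $(1+|\xi|^2)^{(s_1-s_2)/2}\lesssim \theta^{s_1-s_2}$, giving
\[
\bigl(1-\chi(\xi/\theta)\bigr)(1+|\xi|^2)^{s_1/2}\lesssim \theta^{s_1-s_2}(1+|\xi|^2)^{s_2/2}.
\]
For the third inequality, differentiating in $\theta$ yields $\tfrac{d}{d\theta}\chi(\xi/\theta)=-\xi\theta^{-2}\chi'(\xi/\theta)$, which is supported on $\{\theta\leq|\xi|\leq 2\theta\}$ and bounded in absolute value by $C/\theta$ there; combining this with $(1+|\xi|^2)^{s_1/2}\lesssim \theta^{s_1}$ on the same annulus delivers the expected gain $\theta^{s_1-s_2-1}$.

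The only mild obstacle is ensuring that the extension operator $E$ is bounded uniformly across the finite range of Sobolev indices that the Nash--Moser iteration visits; Stein's universal extension operator for Lipschitz domains provides a single $E$ that works simultaneously for every $s\geq 0$, so no delicate dependence on $s$ arises. With this choice, the three bounds in \eqref{E4-0} follow from the three pointwise multiplier estimates above and Plancherel, completing the construction.
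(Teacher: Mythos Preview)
Your argument is correct: the extension--truncation construction you describe is the standard way to produce such smoothing operators, and the three multiplier bounds you record are exactly what is needed. The one point worth being slightly more careful about is the identity $(Eu)|_\Omega=u$, which you use implicitly when writing $\Pi_\theta u-u$ as the restriction of $\mathcal{F}^{-1}\bigl((\chi(\cdot/\theta)-1)\widehat{Eu}\bigr)$; since $E$ is a genuine extension operator this holds, but it should be stated.

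As for comparison with the paper: the paper does not prove this lemma at all. It is stated with the attribution ``(see \cite{Alin})'' and is quoted as a black box from Alinhac's work on rarefaction waves, where smoothing operators of this type are a standard ingredient in Nash--Moser schemes. So your proposal goes beyond what the paper does by actually supplying the construction; the approach you take is the classical one and is essentially what lies behind the cited reference.
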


In our iteration scheme, we set
$$
\theta=N_m=2^m,\quad \forall m= 0,1,2,\ldots,
$$
then by (\ref{E4-0}), it holds
\bel{E4-1}
\|\Pi_{N_m}u\|_{\HH^{s_1}}\lesssim N_m^{s_1-s_2}\|u\|_{\HH^{s_2}},\quad \forall s_1\geq s_2.
\ee

Introduce an auxiliary function
\bel{E4-2}
\psi(t,x)=w(t,x)-\eps w_0(x)-\eps tw_1(x),
\ee
where $(\eps w_0(x),\eps w_1(x))$ given in (\ref{E03-01}) is the small initial data of equation (\ref{E3-1}).

By means of auxiliary function (\ref{E4-2}), we get zero initial data
$$
\psi(0,x)=0,\quad \psi_t(0,x)=0,
$$
meanwhile, we reduce equation (\ref{E3-1}) into a nonlinear equation with zero initial data as follows
\bel{E4-3}
\aligned
\psi_{tt}&-\frac{((T-t)^2-x^2)^2-4x^2}{((T-t)^2-x^2)^2+4(T-t)^2}\psi_{xx}
-\frac{8(T-t)}{((T-t)^2-x^2)^2+4(T-t)^2}\psi_t\\
&+\frac{8x}{((T-t)^2-x^2)^2+4(T-t)^2}\psi_x-\frac{8x(T-t)}{((T-t)^2-x^2)^2+4(T-t)^2}\psi_{tx}\\
&+\frac{((T-t)^2-x^2)^2}{((T-t)^2-x^2)^2+4(T-t)^2}\Big[(\frac{4x(T-t)}{((T-t)^2-x^2)^2}+\psi_{tt})\psi_x^2+(\frac{4x(T-t)}{((T-t)^2-x^2)^2}+\psi_{xx})\psi_t^2\\
&-2(\frac{2(T-t)}{(T-t)^2-x^2}\psi_t+\frac{2x}{(T-t)^2-x^2}\psi_{x})\psi_{tx}
+2(\frac{2(T-t)}{(T-t)^2-x^2}\psi_{tt}-\frac{2((T-t)^2+x^2)}{((T-t)^2-x^2)^2}\psi_{t})\psi_{x}\\
&+2(\frac{2x}{(T-t)^2-x^2}\psi_{xx}-\psi_x\psi_{tt})\psi_t\Big]=F(t,x),
\endaligned
\ee
where
$$
\aligned
F(t,x)&:=\frac{((T-t)^2-x^2)^2-4x^2}{((T-t)^2-x^2)^2+4(T-t)^2}(w_0''+tw_1'')+\frac{8(T-t)}{((T-t)^2-x^2)^2+4(T-t)^2}w_1\\
&-\frac{8x}{((T-t)^2-x^2)^2+4(T-t)^2}(w_0'+tw_1')+\frac{8x(T-t)}{((T-t)^2-x^2)^2+4(T-t)^2}w_1'\\
&-\frac{((T-t)^2-x^2)^2}{((T-t)^2-x^2)^2+4(T-t)^2}\Big[(\frac{4x(T-t)}{((T-t)^2-x^2)^2}(w_0'+tw_1')^2\\
&+(\frac{4x(T-t)}{((T-t)^2-x^2)^2}+w_0''+tw_1'')w_1^2-2(\frac{2(T-t)}{(T-t)^2-x^2}w_1'+\frac{2x}{(T-t)^2-x^2}(w_0'+tw_1'))w_1'\\
&-2\frac{2((T-t)^2+x^2)}{((T-t)^2-x^2)^2}w_1')(w_0'+tw_1')+\frac{4x}{(T-t)^2-x^2}(w_0''+tw_1'')w_1\Big].
\endaligned
$$

We consider the approximation equation of nonlinear equation (\ref{E4-3}) as follows
\bel{E4-4}
\aligned
\Fcal(\psi)&:=\psi_{tt}-\frac{((T-t)^2-x^2)^2-4x^2}{((T-t)^2-x^2)^2+4(T-t)^2}\psi_{xx}
-\frac{8(T-t)}{((T-t)^2-x^2)^2+4(T-t)^2}\psi_t\\
&+\frac{8x}{((T-t)^2-x^2)^2+4(T-t)^2}\psi_x-\frac{8x(T-t)}{((T-t)^2-x^2)^2+4(T-t)^2}\psi_{tx}\\
&+\frac{((T-t)^2-x^2)^2}{((T-t)^2-x^2)^2+4(T-t)^2}\Pi_{N_m}\Big[(\frac{4x(T-t)}{((T-t)^2-x^2)^2}+\psi_{tt})\psi_x^2+(\frac{4x(T-t)}{((T-t)^2-x^2)^2}+\psi_{xx})\psi_t^2\\
&-2(\frac{2(T-t)}{(T-t)^2-x^2}\psi_t+\frac{2x}{(T-t)^2-x^2}\psi_{x})\psi_{tx}
+2(\frac{2(T-t)}{(T-t)^2-x^2}\psi_{tt}-\frac{2((T-t)^2+x^2)}{((T-t)^2-x^2)^2}\psi_{t})\psi_{x}\\
&+2(\frac{2x}{(T-t)^2-x^2}\psi_{xx}-\psi_x\psi_{tt})\psi_t\Big]-F(t,x).
\endaligned
\ee

Assume that the $m$-th approximation solution of (\ref{E4-4}) is denoted by $\psi^{(m)}$ with $m=0,1,2,\ldots$. Let
$$
h^{(m)}:=\psi^{(m)}-\psi^{(m-1)},\quad for \quad m=1,2,\ldots,
$$
so we have
$$
\psi^{(m)}=\psi^{(0)}+\sum_{i=1}^mh^{(i)}.
$$
Our target is to prove that $\psi^{(\infty)}$ is a local solution of nonlinear equation (\ref{E4-3}). It is equivalent to show the series $\sum_{i=1}^mh^{(i)}$ is convergence.

Linearizing nonlinear equation (\ref{E4-4}) around $h^{(m)}$, we get a linearized operator
$$
\Lcal(h^{(m)}):=a(t,x)h^{(m)}_{tt}-b(t,x)h^{(m)}_{xx}-c(t,x)h^{(m)}_t+d(t,x)h^{(m)}_x-e(t,x)h^{(m)}_{tx},
$$
where coefficients $a(t,x)$, $b(t,x)$, $c(t,x)$, $d(t,x)$ and $e(t,x)$ have the same form with (\ref{E3-2x}).

We can choose an initial approximation solution $\psi^{(0)}$ such that initial error term
$$
E^{(0)}:=\Lcal(\psi^{(0)})-F(t,x),\quad \forall t\in[0,\overline{T}],
$$
satisfies
\bel{E4-5}
\aligned
&\psi^{(0)}\neq0,\\
&\|\psi^{(0)}\|_{\Ccal_2^{s_0+2}}\lesssim \eps,\\
&\|E^{(0)}\|_{\Ccal_2^{s_0}}\lesssim\eps,
\endaligned
\ee
for some $s_0\geq2$ and constant $\overline{T}\in[T-\overline{\delta},T)$ with $0<\overline{\delta}\ll1$. It is easy to see that (\ref{E4-5}) holds for a sufficient small $\eps$,
and $\|F(t,x)\|_{\HH^s(\Omega)}$ is bounded in $[0,\overline{T}]$, which can be controlled by $\eps$ due to small initial data $(\eps w_0,\eps w_1)$.

The $m$-th error terms is denoted by
\bel{E4-6}
R(h^{(m)}):=\Fcal(\psi^{(m-1)}+h^{(m)})-\Fcal(\psi^{(m-1)})-\Lcal(h^{(m)}),
\ee
which is also nonlinear term of approximation equation (\ref{E4-4}). The exact form of nonlinear term (\ref{E4-6}) is very complicated, here we do not write it down. 

\begin{lemma}
Let $\psi^{(m)}\in\Bcal_R$. For any $s\geq2$ and $t\in[0,\overline{T}]$, it holds
\bel{E4-9R1}
\|R(h^{(m)})\|_{\Ccal_2^s}\lesssim N_m^4\|h^{(m)}\|^2_{\Ccal_2^{s}}.
\ee
\end{lemma}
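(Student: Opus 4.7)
My plan is to identify $R(h^{(m)})$ as the second-order Taylor remainder of the operator $\Fcal$ at the previous iterate $\psi^{(m-1)}$. With the coefficients (\ref{E3-2x}) evaluated at $\overline{w}=\psi^{(m-1)}$, the linear operator $\Lcal$ coincides with the Fr\'echet derivative $D\Fcal(\psi^{(m-1)})$, so
\[
R(h^{(m)}) = j(t,x)\,\Pi_{N_m}\!\bigl[Q(\psi^{(m-1)}+h^{(m)})-Q(\psi^{(m-1)})-DQ(\psi^{(m-1)})\,h^{(m)}\bigr],
\]
where $Q$ is the polynomial of degree at most $3$ in $(\psi,\psi_t,\psi_x,\psi_{tt},\psi_{xx},\psi_{tx})$ appearing inside the $\Pi_{N_m}$-bracket in (\ref{E4-4}). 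The bracket unfolds into a finite sum of monomials of degree $\geq 2$ in $h^{(m)}$ and its derivatives up to order two, with coefficients polynomial in $\psi^{(m-1)}$ and in the singular weights $j(t,x)$ and $(T-t)/((T-t)^2-x^2)$, which are uniformly bounded on $[0,\overline{T}]\times\Omega$ since $\overline{T}<T$ and $x\in[0,\delta(T-t)]$ with $\delta\sim 1$; the induction hypothesis $\psi^{(m-1)}\in\Bcal_R$ then keeps the full coefficient uniformly bounded.

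The first step is to control $\|R(h^{(m)})\|_{\HH^s}$. Each monomial is a product of factors $\del^{\alpha}h^{(m)}$ with $|\alpha|\leq 2$, each lying in $\HH^{s-2}(\Omega)$; Moser's bilinear inequality for the Sobolev algebra $\HH^{s-2}(\Omega)$, valid in one space dimension for $s\geq 2$, bounds the product by $\|h^{(m)}\|_{\Ccal_2^s}^2$. The smoothing inequality (\ref{E4-1}) with $s_1=s$ and $s_2=s-2$ then gives
\[
\bigl\|\Pi_{N_m}[\cdots]\bigr\|_{\HH^s}\lesssim N_m^{2}\,\|h^{(m)}\|_{\Ccal_2^s}^2,
\]
which is already dominated by the claimed $N_m^{4}\|h^{(m)}\|_{\Ccal_2^s}^2$.

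The main difficulty is the control of $\del_t^k R(h^{(m)})$ in $\HH^{s-k}$ for $k=1,2$. Since $\Pi_{N_m}$ is purely spatial it commutes with $\del_t$, and Leibniz distributes $\del_t^k$ over each monomial; the worst case places both extra time derivatives on a single factor $\del_{tt}h^{(m)}$, producing $\del_t^4 h^{(m)}$, which is not controlled by $\Ccal_2^s$. To absorb this loss I will widen the smoothing gap: taking $s_2=s-4$ in (\ref{E4-1}) to estimate $\|\Pi_{N_m}\del_t^2[\cdots]\|_{\HH^{s-2}}$ costs an extra factor of $N_m^2$, and I then exchange the surplus time derivatives on $h^{(m)}$ for spatial derivatives via the linear equation (\ref{E3-2}) that $h^{(m)}$ satisfies in the iteration (modulo lower-order terms controlled by Lemma 3.1 and by $\psi^{(m-1)}\in\Bcal_R$). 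After the exchange, Moser's product bound in $\HH^{s-4}$ again yields $\lesssim\|h^{(m)}\|_{\Ccal_2^s}^2$, so the combined cost is $N_m^{2+2}=N_m^{4}$; the case $k=1$ is strictly easier and falls under the same $N_m^{4}$ bound. Taking $\sup_{t\in[0,\overline{T}]}$ of the three estimates yields (\ref{E4-9R1}).

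The principal obstacle is precisely this time-derivative accounting: one must verify that each surplus time derivative on $h^{(m)}$ is paid for by exactly one power of $N_m$ from the smoothing gap together with a controlled PDE-based exchange, while the singular weights $1/(T-t)^p$ from Lemma 3.1 do not degrade the estimate; the latter holds because those weights enter only multiplicatively and are uniformly bounded on $[0,\overline{T}]\times\Omega$ with bound depending only on $T-\overline{T}>0$.
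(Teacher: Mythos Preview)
Your route is genuinely different from, and more elaborate than, the paper's. The paper's argument is essentially two lines: it records that $R(h^{(m)})$ is at least quadratic in $h^{(m)}$ (the cubic terms being absorbed via $\|h^{(m)}\|_{\Ccal_2^s}<R<1$, so that $\|h^{(m)}\|^p_{\Ccal_2^s}\le\|h^{(m)}\|^2_{\Ccal_2^s}$ for $p\ge2$) with space--time derivatives of order at most two, and then writes directly
\[
\|R(h^{(m)})\|_{\Ccal_2^s}\lesssim \|h^{(m)}\|^2_{\Ccal_2^{s+2}}\lesssim N_m^4\|h^{(m)}\|^2_{\Ccal_2^{s}},
\]
citing (\ref{E4-1}) together with Young's inequality. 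There is no decomposition of the $\Ccal_2^s$-norm into its three time-derivative components, and no PDE-based exchange of time for space derivatives. The strategic difference is where the smoothing estimate is spent: you place it on the nonlinear product itself (which is where $\Pi_{N_m}$ actually sits in (\ref{E4-4})), and this forces you to track $\partial_t^3 h^{(m)}$, $\partial_t^4 h^{(m)}$ and to invoke the linear equation to trade them back for spatial regularity; the paper instead passes the gain of two derivatives directly onto each factor $h^{(m)}$, reading $\|h^{(m)}\|_{\Ccal_2^{s+2}}\lesssim N_m^{2}\|h^{(m)}\|_{\Ccal_2^{s}}$. Your argument is more explicit about the role of $\Pi_{N_m}$ and about the time-derivative bookkeeping, at the price of the extra PDE-exchange step (which drags in the forcing $-\Fcal(\psi^{(m-1)})=-E^{(m-1)}$ and so, strictly, couples the bound to the previous error); the paper's version is much shorter but treats the second inequality as a direct consequence of (\ref{E4-1}), an estimate that as written applies to functions of the form $\Pi_{N_m}u$ rather than to $h^{(m)}$ itself.
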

\begin{proof}
We notice that the highest order of nonlinear term in (\ref{E4-6}) is $3$, and the highest order of derivatives on $x$ and $t$ in (\ref{E4-6})
are $2$. Since the solution of (\ref{E4-3}) should be constructed in $\Bcal_R$, so we should prove there exists a positive constant $s$ such that
$$
\|h^{(m)}\|_{\Ccal_2^{s}}\leq R<1,\quad \forall m\in\NN,
$$ 
thus it holds
$$
\|h^{(m)}\|^p_{\Ccal_2^{s}}\leq \|h^{(m)}\|^2_{\Ccal_2^{s}},\quad for\quad p\geq2.
$$

We use (\ref{E4-1}) and Young's inequality to get
$$
\|R(h^{(m)})\|_{\Ccal_2^s}\lesssim \|h^{(m)}\|^2_{\Ccal_2^{s+2}}\lesssim N_m^4\|h^{(m)}\|^2_{\Ccal_2^{s}}.
$$

\end{proof}

The following Lemma is to construct the $m$-th approximation solution.
\begin{lemma}
Let $s\geq2$. The approximation equation (\ref{E4-4}) 
admits a solution $h^{(m)}\in\Ccal_2^{s}$ satisfying
\bel{E4-10R1}
\|h^{(m)}\|_{\Ccal_2^{s}}\lesssim \|E^{(m-1)}\|_{\Ccal_2^{s}},\quad t\in[0,\overline{T}],
\ee
where the error term
\bel{E4-10R2}
E^{(m-1)}:=\Fcal(\psi^{(m-1)})=R(h^{(m-1)}).
\ee
\end{lemma}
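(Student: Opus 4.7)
The plan is to reduce the construction of $h^{(m)}$ to a direct application of Lemma~3.8 for the linearized operator $\Lcal$ around $\psi^{(m-1)}$, with vanishing initial data and the timelike boundary condition inherited from the perturbation setup.

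First, I would identify the equation to be solved. Writing $\psi^{(m)} = \psi^{(m-1)} + h^{(m)}$ and Taylor expanding the smoothed nonlinear operator $\Fcal$ defined in (\ref{E4-4}) around $\psi^{(m-1)}$ gives
\begin{equation*}
\Fcal(\psi^{(m-1)} + h^{(m)}) = \Fcal(\psi^{(m-1)}) + \Lcal(h^{(m)}) + R(h^{(m)}),
\end{equation*}
with the quadratic remainder $R$ defined in (\ref{E4-6}). Imposing $\Fcal(\psi^{(m)}) = R(h^{(m)})$ at the next stage forces us to solve
\begin{equation*}
\Lcal(h^{(m)}) = -\Fcal(\psi^{(m-1)}) = -E^{(m-1)}
\end{equation*}
subject to zero initial data $h^{(m)}(0,x) = 0$, $h^{(m)}_t(0,x) = 0$ and the timelike boundary condition (\ref{E03-02}). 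The vanishing initial traces are automatic from the reduction (\ref{E4-2}), which makes $\psi$, and hence every approximant $\psi^{(m)}$, vanish with its time derivative at $t=0$.

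Second, I would verify the hypotheses of Lemma~3.8. The coefficients of $\Lcal$ have the explicit form (\ref{E3-2x}) with $\overline{w} = \psi^{(m-1)}$; under the inductive assumption $\psi^{(m-1)} \in \Bcal_R$ with $R$ chosen small enough, Lemma~3.1 provides the required bounds on the coefficients and the sign conditions (\ref{E3-5R9})--(\ref{E3-5R10}) together with their elliptic counterparts remain valid, so the energy machinery of Lemmas~3.2--3.3 and~3.5--3.7 applies uniformly on $\Omega = \Omega_1 \cup \Dcal$. The source term $-E^{(m-1)}$ lies in $\Ccal_2^s$: for $m=1$ this is exactly the a priori bound (\ref{E4-5}) on the initial error; for $m \geq 2$ the previous step enforces $\Lcal(h^{(m-1)}) = -E^{(m-2)}$, so that $E^{(m-1)} = \Fcal(\psi^{(m-1)}) = R(h^{(m-1)})$, which belongs to $\Ccal_2^s$ by Lemma~3.9.

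Third, I would invoke Lemma~3.8 with $(h_0,h_1) = (0,0)$ and $f = -E^{(m-1)}$. This yields a unique solution $h^{(m)} \in \Ccal_2^s$ and, via the estimate (\ref{XX1-5}),
\begin{equation*}
\|h^{(m)}\|_{\Ccal_2^s} \;\leq\; \|(0,0)\|_{\HH^s \times \HH^{s-1}} + \|E^{(m-1)}\|_{\Ccal_2^s} \;=\; \|E^{(m-1)}\|_{\Ccal_2^s},
\end{equation*}
which is precisely (\ref{E4-10R1}); the identity (\ref{E4-10R2}) is built into the scheme. The main obstacle is not in the one-step construction itself, which is simply an application of Lemma~3.8, but in maintaining the inductive hypothesis $\psi^{(m-1)} \in \Bcal_R$ so that the coefficient estimates and the large-parameter choices of $\nu$, $\mu$, $\chi$ in Lemmas~3.2--3.6 can be made uniformly in $m$. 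Closing that bootstrap is deferred to the convergence argument of the Nash-Moser scheme, where one combines (\ref{E4-10R1}) with the quadratic estimate (\ref{E4-9R1}) of Lemma~3.9 and uses the smallness of $\eps$ in (\ref{E4-5}) to absorb the loss $N_m^4$.
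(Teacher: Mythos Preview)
Your proposal is correct and follows essentially the same route as the paper: Taylor expand $\Fcal$ around $\psi^{(m-1)}$, define $h^{(m)}$ as the solution of the linear problem $\Lcal(h^{(m)})=-\Fcal(\psi^{(m-1)})$ with zero initial data, and read off the estimate from Lemma~3.8 via (\ref{XX1-5}). Your write-up is in fact more careful than the paper's in spelling out why the hypotheses of Lemma~3.8 are met and in noting that the inductive hypothesis $\psi^{(m-1)}\in\Bcal_R$ is closed only later in the Nash--Moser convergence argument.
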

\begin{proof}
We notice that the initial approximation solution $\psi^{(0)}$ satisfies (\ref{E4-5}). The $m-1$-th approximation solution is 
$$
\psi^{(m-1)}=\psi^{(0)}+\sum_{i=1}^{m-1}h^{(i)}.
$$
Then we will find the $m$-th approximation solution $\psi^{(m)}$, which is equivalent to find $h^{(m)}$ such that 
\bel{E4-7}
\psi^{(m)}=\psi^{(m-1)}+h^{(m)}.
\ee
Substituting (\ref{E4-7}) into (\ref{E4-4}), it holds
$$
\Fcal(\psi^{(m)})=\Fcal(\psi^{(m-1)})+\Lcal(h^{(m)})+R_m(h^{(m)}).
$$

Let 
$$
\Fcal(\psi^{(m-1)})+\Lcal(h^{(m)})=0,
$$
with zero initial data
$$
h^{(m)}(0,x)=0,\quad h^{(m)}_t(0,x)=0.
$$
By Lemma 3.8, above zero initial data problem admits a solution $h^{(m)}\in\Ccal_2^{s}$. Furthermore, by (\ref{XX1-5}), it holds
$$
\|h^{(m)}\|_{\Ccal_2^{s}}\lesssim \|\Fcal(\psi^{(m-1)})\|_{\Ccal_2^{s}}.
$$

Moreover, it holds
$$
E^{(m)}:=\Fcal(\psi^{(m)})=R(h^{(m)}).
$$
\end{proof}

For a fixed constant $s\geq2$, let $2\leq \bar{s}<s_0\leq s$, and
$$
\aligned
&s_l:=\bar{s}+\frac{s-\bar{s}}{2^l},\\
&\alpha_{l+1}:=s_l-s_{l+1}=\frac{s-\bar{s}}{2^{l+1}},
\endaligned
$$
which gives that
\bel{EX1-1}
s_0>s_1>\ldots>s_l>s_{l+1}>\ldots.
\ee

\begin{proposition}
Let $\overline{\delta}$ and $\eps$ be two small positive constants.
Equation (\ref{E3-1}) with small initial data (\ref{E03-01}) admits a solution 
$$
w(t,x)=\psi^{(\infty)}(t,x)+\eps w_0(x)+\eps tw_1(x),\quad \forall t\in[0,\overline{T}],
$$
where $\overline{T}\in[T-\overline{\delta},T)$ is a positive constant, and for a fixed $\bar{s}\geq2$,
$$
\psi^{(\infty)}(t,x)=\psi^{(0)}+\sum_{m=1}^{\infty}h^{(m)}\in\Ccal_2^{\bar{s}}.
$$
\end{proposition}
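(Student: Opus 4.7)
The plan is to follow the standard Nash--Moser convergence scheme: combine the linearised solvability of Lemma 3.11 with the quadratic remainder estimate of Lemma 3.10, applied on the decreasing index scale $\{s_l\}$ introduced just before the proposition, and prove geometric decay of $\|h^{(m)}\|_{\Ccal_2^{s_m}}$. Set $\delta_m := \|h^{(m)}\|_{\Ccal_2^{s_m}}$. Using the identification $E^{(m-1)} = R(h^{(m-1)})$ from (\ref{E4-10R2}) and the embedding $\Ccal_2^{s_{m-1}} \hookrightarrow \Ccal_2^{s_m}$ (since $s_{m-1} > s_m$), Lemmas 3.10 and 3.11 chain into
$$
\delta_m \lesssim \|R(h^{(m-1)})\|_{\Ccal_2^{s_m}} \lesssim N_{m-1}^{4}\,\|h^{(m-1)}\|_{\Ccal_2^{s_m}}^{2} \le N_{m-1}^{4}\,\delta_{m-1}^{2}.
$$

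Next I would prove by induction on $m$ that $\delta_m \le A\, N_m^{-\kappa}$ for some exponent $\kappa > 4$ and some constant $A = A(\eps)$ to be chosen. The inductive step reduces to
$$
C N_{m-1}^{4}\,\delta_{m-1}^{2} \le C A^{2}\, N_{m-1}^{4-2\kappa} = C A^{2}\, 2^{2\kappa-4}\, N_m^{4-2\kappa} \le A\, N_m^{-\kappa},
$$
which holds for any $\kappa > 4$ (so that $N_m^{4-\kappa} \le 1$) provided $A$ is chosen small enough that $C A\, 2^{2\kappa-4} \le 1$. The base case is guaranteed by (\ref{E4-5}), which yields $\delta_1 \lesssim \|E^{(0)}\|_{\Ccal_2^{s_0}} \lesssim \eps \le A\, N_1^{-\kappa}$ once $\eps$ is sufficiently small. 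The same bound keeps $\psi^{(m)} = \psi^{(0)} + \sum_{i\le m} h^{(i)}$ inside $\Bcal_R$ for every $m$, so that Lemmas 3.10--3.11 remain applicable throughout the iteration.

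Third, since $s_m > \bar{s}$ for every $m$, we have $\Ccal_2^{s_m} \hookrightarrow \Ccal_2^{\bar{s}}$, and therefore
$$
\sum_{m \ge 1} \|h^{(m)}\|_{\Ccal_2^{\bar{s}}} \le \sum_{m \ge 1} \delta_m \le A \sum_{m \ge 1} 2^{-\kappa m} < \infty,
$$
so $\psi^{(m)} \to \psi^{(\infty)}$ in $\Ccal_2^{\bar{s}}$. To identify the limit as a solution of the unsmoothed equation (\ref{E4-3}), I would combine the approximation property $\Pi_{N_m} u \to u$ from (\ref{E4-0}) with $\|\Fcal(\psi^{(m)})\|_{\Ccal_2^{s_{m+1}}} = \|E^{(m)}\|_{\Ccal_2^{s_{m+1}}} \to 0$ to pass to the limit in (\ref{E4-4}) as $m \to \infty$. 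Then $w = \psi^{(\infty)} + \eps w_0(x) + \eps t\, w_1(x)$ is the required local solution of (\ref{E3-1}) on $[0,\overline{T}]$ with the prescribed small initial data.

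The principal obstacle is the simultaneous calibration of $\kappa$, $A$, and $\eps$: one fixes $\kappa > 4$ first, then shrinks $A$ so that $CA\, 2^{2\kappa-4} \le 1$, and only then shrinks $\eps$ both to initialise the induction and to keep $\psi^{(m)}$ inside the ball $\Bcal_R$ (which in turn controls the coefficients of the linearised operator in Lemma 3.1 and hence the implicit constants inside Lemmas 3.10--3.11, closing what would otherwise be a circular dependence). A secondary technical point is verifying that Lemma 3.10 remains valid at the lower index $s_m$ in place of the fixed $s$; this uses only the algebra property of $\HH^{s_m}(\Omega)$ for $s_m \ge \bar{s} \ge 2$ in one space dimension, which is the role played by the hypothesis $\bar{s} \ge 2$.
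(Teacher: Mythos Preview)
Your argument is correct and uses the same ingredients as the paper (Lemmas 3.10--3.11 chained on the decreasing scale $\{s_m\}$), but the inductive bookkeeping differs slightly. The paper iterates the quadratic remainder estimate all the way down to obtain the super-exponential bound $\|E^{(m)}\|_{\Ccal_2^{s_m}}\lesssim (N_0^{8}\|E^{(0)}\|_{\Ccal_2^{s_0}})^{2^{m}}$ and hence $\|h^{(m)}\|_{\Ccal_2^{s_m}}<d^{2^{m}}$, whereas you run the induction with the weaker but still summable geometric ansatz $\delta_m\le A\,N_m^{-\kappa}$ for some $\kappa>4$. The paper's version extracts the full quadratic gain, so the smallness condition collapses to the single inequality $N_0^{8}\eps<d^{2}$; your version is the more standard Nash--Moser format in which an exponent $\kappa$ is tuned to beat the fixed derivative loss $N_m^{4}$. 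Both routes close the same loop ($\psi^{(m)}\in\Bcal_R$, convergence in $\Ccal_2^{\bar{s}}$, $E^{(m)}\to 0$), and your explicit remark about passing from the smoothed operator (\ref{E4-4}) to the unsmoothed equation (\ref{E4-3}) via $\Pi_{N_m}\to\mathrm{Id}$ is a point the paper leaves implicit.
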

\begin{proof}
The proof is based on the induction. Note that $N_m=N_0^m$ with $N_0>1$. $\forall m=1,2,\ldots$, we claim that for any $t\in[0,\overline{T}]$ with $\overline{T}\in[T-\overline{\delta},T)$, there exists a sufficient small positive constant $d$ such that
\bel{E4-12}
\aligned
&\|h^{(m)}\|_{\Ccal_2^{s_m}}<d^{2^m},\\
&\|E^{(m-1)}\|_{\Ccal_2^{s_m}}<d^{2^{m+1}},\\
&\psi^{(m)}\in\Bcal_R.
\endaligned
\ee

For the case of $m=1$, we recall that the assumption on initial approximation (\ref{E4-5}), i.e. $\forall t\in[0,\overline{T}]$,
$$
\aligned
&\psi^{(0)}\neq0,\\
&\|\psi^{(0)}\|_{\Ccal_2^{s_0+2}}\lesssim \eps,\\
&\|E^{(0)}\|_{\Ccal_2^{s_0}}\lesssim\eps.
\endaligned
$$
So by (\ref{E4-10R1}), let $0<\eps<N_0^{-8}d^2<R\ll1$, it holds
$$
\|h^{(1)}\|_{\Ccal_2^{s_1}}\lesssim \|E^{(0)}\|_{\Ccal_2^{s_0}}\lesssim \eps<d.
$$
Moreover, by (\ref{E4-9R1}) and (\ref{E4-10R2}), we derive
$$
\|E^{(1)}\|_{\Ccal_2^{s_1}}\lesssim\|R_1(h^{(1)})\|_{\Ccal_2^{s_1}}\lesssim N_1^4\|h^{(1)}\|^2_{\Ccal_2^{s_1}}\lesssim \eps N_1^4<d^2,
$$
and 
$$
\|\psi^{(1)}\|_{\Ccal_2^{s_1}}\lesssim\|\psi^{(0)}\|_{\Ccal_2^{s_1}}+\|h^{(1)}\|_{\Ccal_2^{s_1}}\lesssim 2\eps<R,
$$
which means that $\psi^{(1)}\in\Bcal_R$.

Assume that the case $m-1$ step holds, i.e.
\bel{E4-13}
\aligned
&\|h^{(m-1)}\|_{\Ccal_2^{s_{m-1}}}<d^{2^{m-1}},\\
&\|E^{(m-1)}\|_{\Ccal_2^{s_{m-1}}}<d^{2^{m}},\\
&\psi^{(m-1)}\in\Bcal_R,
\endaligned
\ee
then we prove the case $m$ step holds. By (\ref{E4-10R1}) and (\ref{E4-13}), we derive
\bel{E4-14R1}
\|h^{(m)}\|_{\Ccal_2^{s_m}}\lesssim \|E^{(m-1)}\|_{\Ccal_2^{s_{m}}}< \|E^{(m-1)}\|_{\Ccal_2^{s_{m-1}}}<d^{2^{m}},
\ee
which combining with (\ref{E4-9R1}), (\ref{E4-10R2}) and (\ref{EX1-1}), it holds
\bel{E4-14}
\aligned
\|E^{(m)}\|_{\Ccal_2^{s_{m}}}&=\|R(h^{(m)})\|_{\Ccal_2^{s_{m}}}\\
&\lesssim N_{m-1}^4\|E^{(m-1)}\|^2_{\Ccal_2^{s_{m-1}}}\\
&\lesssim N_0^{4(m-1)+8(m-2)}\|E^{(m-2)}\|^{2^2}_{\Ccal_2^{s_{m-2}}}\\
&\lesssim \ldots,\\
&\lesssim (N_0^8\|E_0\|_{\Ccal_2^{s_0}})^{2^m}.
\endaligned
\ee

So by (\ref{E4-5}), we can choose a sufficient small positive constant $\eps$ such that
$$
0<N_0^8\|E_0\|_{\Ccal_2^{s_0}}<N_0^8\eps<d^2,
$$
which combining with (\ref{E4-14}) gives that 
$$
\|E^{(m)}\|_{\Ccal_2^{s_{m}}}<d^{2^{m+1}}.
$$
On the other hand, by (\ref{E4-14R1}), it holds
$$
\|\psi^{(m)}\|_{\Ccal_2^{s_{m}}}\lesssim \|\psi^{(m-1)}\|_{\Ccal_2^{s_{m-1}}}+\|h^{(m)}\|_{\Ccal_2^{s_{m}}}\lesssim \eps+\sum_{i=1}^md^{2^i}<R.
$$
This means that $\psi^{(m)}\in\Bcal_R$. Hence, we conclude that (\ref{E4-12}) holds.

Furthermore, it follows from (\ref{E4-12}) that
$$
\lim_{m\rightarrow\infty}\|E^{(m)}\|_{\Ccal_2^{s_{m}}}=0.
$$ 

In conclusion, equation (\ref{E4-3}) admits a solution 
$$
\psi^{(\infty)}=\psi^{(0)}+\sum_{m=1}^{\infty}h^{(m)}\in\Ccal_2^{\bar{s}}.
$$
Furthermore, by (\ref{E4-2}), equation (\ref{E3-1}) with small initial data (\ref{E03-01}) admits a solution 
$$
w(t,x)=\psi^{(\infty)}(t,x)+\eps w_0(x)+\eps tw_1(x),\quad \forall t\in[0,\overline{T}].
$$

\end{proof}

\subsection{Proof of Theorem 1.2.}
Since initial data is small, i.e. for a sufficient small $\eps>0$, it holds
$$
\|u_0(x)-u_k(0,x)\|_{\HH^{s}(\Bcal_0)}+\|u_1(x)-\del_tu_k(0,x)\|_{\HH^{s}(\Bcal_0)}<\eps,
$$
then by Proposition 3.1, for a fixed constant $\bar{s}\geq2$, equation (\ref{E1-1}) admits a local solution $u(t,x)$ such that $u(t,x)\in\Ccal_2^{\bar{s}}$ and
$$
\| u(t,x) -u_k(t,x)\|_{\Ccal_2^{\bar{s}}}=\|\psi^{(\infty)}(t,x)+\eps w_0(x)+\eps tw_1(x)\|_{\Ccal_2^{\bar{s}}}
< \eps, 
\qquad \forall t\in[0,\overline{T}].
$$
Therefore, self-similar solutions $u_k(t,x)=k\ln(\frac{T-t+x}{T-t-x})$ $(\forall k\in\RR/\{0\})$ are Lyapunov nonlinear stability in the domain $\Bcal_{t}:=\Big\{(t,x)\Big| x\in[0,\delta(T-t)],\quad t\in[0,\overline{T}]\Big\}$ with $\overline{T}\in[T-\delta,T)$ and $0<\delta\ll1$.\\
\\

%================================================================================
%================================================================================
%================================================================================
%================================================================================

\end{document}